\documentclass[letterpaper]{amsart}

\usepackage[utf8]{inputenc}
\usepackage{lmodern}
\usepackage[T1]{fontenc}
\usepackage{amssymb}
\usepackage{mathtools}
\usepackage{tikz-cd}
\usepackage{hyperref}

% aspell:off
\pdfstringdefDisableCommands{\def\and{, }}

\makeatletter
\newcommand\niton{\mathrel{\m@th\mathpalette\canc@l\owns}}
\newcommand\canc@l[2]{{\ooalign{$\hfil#1/\mkern1mu\hfil$\crcr$#1#2$}}}
\makeatother

% aspell:on

\theoremstyle{plain}
\newtheorem{theorem}{Theorem}[section]
\newtheorem{proposition}[theorem]{Proposition}
\newtheorem{lemma}[theorem]{Lemma}
\newtheorem{corollary}[theorem]{Corollary}

\theoremstyle{definition}

\theoremstyle{remark}
\newtheorem*{acknowledgements}{Acknowledgements}

\numberwithin{equation}{section}

\DeclareMathAlphabet\mathbfit{OML}{cmm}{b}{it}

\let\newterm\emph

\def\Z{\mathbb Z}
\def\Q{\mathbb Q}
\def\R{\mathbb R}
\def\C{\mathbb C}
\def\F{\mathbb F}
\def\CC{\mathcal C}
\def\DD{\mathcal D}
\def\kk{\Bbbk}

\DeclareMathOperator{\supp}{supp}
\DeclareMathOperator{\codim}{codim}
\DeclareMathOperator{\corank}{corank}
\DeclareMathOperator{\coker}{coker}
\def\aa{\mathbfit a}

\def\cc{\mathbfit c}

\def\deg#1{|#1|}

\def\lambda{\ell}

\def\cf{\emph{cf.}}

\def\hatgamma{\skew2\hat\gamma}

\def\RZtwo{H^{2}(BT;\Z)}

\def\HT{H_{T}}
\def\HG{H_{G}}

\begin{document}

\title{The syzygy order of big polygon spaces}
\author{Matthias Franz}
\thanks{M.\,F.\ was supported by an NSERC Discovery Grant.}
\address{Department of Mathematics, University of Western Ontario,
  London, Ont.\ N6A\;5B7, Canada}
\email{mfranz@uwo.ca}
\author{Jianing Huang}
\email{jhuan233@uwo.ca}
      
\subjclass[2010]{Primary 55N91; secondary 13D02, 55R80}
% 14P25 Topology of real algebraic varieties

\begin{abstract}
  Big polygon spaces are compact orientable manifolds with a torus action
  whose equivariant cohomology can be torsion-free or reflexive
  without being free as a module over~\(H^{*}(BT)\).
  We determine the exact syzygy order of the equivariant cohomology
  of a big polygon space in terms of the length vector defining it.
  The proof uses a refined characterization of syzygies in terms
  of certain linearly independent elements in~\(H^{2}(BT)\)
  adapted to the isotropy groups occurring in a given \(T\)-space.
\end{abstract}

% aspell:off
\hypersetup{pdfauthor=\authors}
% aspell:on

\maketitle

\section{Introduction}

Let \(T\cong(S^{1})^{r}\) be a torus,
and let \(X\) be a \(T\)-manifold whose cohomology~\(H^{*}(X)\) (with real coefficients) is finite-dimensional.
A powerful tool to compute the equivariant cohomology~\(\HT^{*}(X)\) is the Chang--Skjelbred sequence
\begin{equation}
  \label{eq:chang-skjelbred}
  0 \longrightarrow
  \HT^{*}(X) \longrightarrow
  \HT^{*}(X^{T}) \longrightarrow
  \HT^{*+1}(X_{1},X^{T})
\end{equation}
where \(X^{T}\subset X\) is the fixed point set
and the equivariant \(1\)-skeleton~\(X_{1}\) the union of orbits of dimension at most~\(1\).
The first map is induced by the inclusion~\(X^{T}\hookrightarrow X\) and the second one
is the connecting homomorphism in the long exact sequence for the pair~\((X_{1},X^{T})\).

If \(\HT^{*}(X)\) is free as a module over the polynomial ring~\(R=H^{*}(BT)\),
then the Chang--Skjelbred sequence is exact \cite[Prop.~2.4]{ChangSkjelbred:1974},
which implies that \(\HT^{*}(X)\) can be calculated out of the equivariant \(1\)-skeleton.
In many cases of interest, \(X^{T}\) is finite and \(X_{1}\) a union of \(2\)-spheres glued together at their poles.
In such a setting, this approach is called the GKM~method after work of Goresky--Kottwitz--Mac\-Pherson~\cite[Thm.~7.2]{GoreskyKottwitzMacPherson:1998}.

It is not hard to find examples of \(T\)-manifolds such that \eqref{eq:chang-skjelbred} is exact
without \(\HT^{*}(X)\) being free over~\(R\), see below. This phenomenon was studied in detail
by Allday--Franz--Puppe~\cite{AlldayFranzPuppe:orbits1},~\cite{AlldayFranzPuppe:orbits4},
who characterized those \(T\)-manifolds for which the Chang--Skjelbred sequence is exact;
in~\cite{Franz:nonab} this is generalized to non-abelian Lie groups. Allday--Franz--Puppe
actually proved a more general theorem that involves higher equivariant skeletons.
For~\(-1\le i\le r\),
we write \(X_{i}\subset X\) for % the equivariant \(i\)-skeleton, that is,
the union of orbits of dimension at most~\(i\),
so that \(X_{0}=X^{T}\) and~\(X_{r}=X\).

\begin{theorem}[Allday--Franz--Puppe]
  The Chang--Skjelbred sequence~\eqref{eq:chang-skjelbred} is exact
  if and only if \(\HT^{*}(X)\) is a reflexive \(R\)-module.
  More generally, for any~\(1\le k\le r\), the % Atiyah--Bredon
  sequence
  \begin{multline*}
    0 \longrightarrow
    \HT^{*}(X) \longrightarrow
    \HT^{*}(X_{0}) \longrightarrow
    \HT^{*+1}(X_{1},X_{0}) \\ \longrightarrow
    \HT^{*+2}(X_{2},X_{1}) \longrightarrow
    \dots \longrightarrow
    \HT^{*+k-1}(X_{k-1},X_{k-2})
  \end{multline*}
  is exact if and only if \(\HT^{*}(X)\) is a \(k\)-th syzygy over~\(R\).
\end{theorem}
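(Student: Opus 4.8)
The plan is to recast both conditions as statements about a single object --- the \emph{Atiyah--Bredon complex} of~$X$ --- and to bridge them by a duality argument. Write $M=\HT^{*}(X)$; under the hypothesis it is a finitely generated graded module over the polynomial ring $R=H^{*}(BT)$ on $r=\dim T$ generators of degree~$2$. Splicing the long exact sequences of the pairs $(X_{i},X_{i-1})$ attached to the filtration $X_{0}\subseteq X_{1}\subseteq\dots\subseteq X_{r}=X$ by equivariant skeleta produces a complex of graded $R$-modules
\[
  \mathcal A^{\bullet}(X)\colon\quad 0\to\HT^{*}(X_{0})\to\HT^{*+1}(X_{1},X_{0})\to\dots\to\HT^{*+r}(X_{r},X_{r-1})\to 0,
\]
and the restriction map $M\to\HT^{*}(X_{0})$ augments it. The sequence in the theorem is exactly the truncation of this augmented complex through the term in position~$k-1$, so the assertion to prove is that the augmented Atiyah--Bredon complex is exact in positions $-1,0,\dots,k-2$ if and only if $M$ is a $k$-th syzygy over~$R$.

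First I would analyse the terms of $\mathcal A^{\bullet}(X)$. By excision and the local structure of the orbit stratification, $\HT^{*+i}(X_{i},X_{i-1})$ splits as a finite direct sum, indexed by the components of the stratum of $i$-dimensional orbits, of shifted copies of finite free modules over quotients $R/\mathfrak p_{S}$, where $S$ ranges over the isotropy groups occurring there and $\mathfrak p_{S}=\ker\bigl(H^{*}(BT)\to H^{*}(BS)\bigr)$ is a prime of height~$i$. Thus the $i$-th term is Cohen--Macaulay of codimension~$i$, and --- as for $M$ itself, by the Atiyah--Segal--Quillen localization theorem --- the support of the whole complex is the union of the linear subvarieties $V(\mathfrak p_{S})$ taken over the isotropy groups of the action.

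The decisive input is a Poincar\'e--Atiyah--Bredon duality: using (equivariant) Poincar\'e duality on the manifold~$X$, the complex $\mathcal A^{\bullet}(X)$ is, up to a degree shift, self-dual for $R$-linear (equivalently graded Matlis) duality. Consequently its cohomology in position $i\ge 1$ can be identified, up to a shift, with $\operatorname{Ext}^{i}_{R}(M,R)$ --- equivalently with the Matlis dual of the local cohomology module $H^{r-i}_{\mathfrak m}(M)$ --- while exactness of the augmented complex at position~$-1$ amounts to $M$ being torsion-free. Crucially this is compatible with localization: localizing at a prime $\mathfrak p$ annihilates every term coming from a stratum whose isotropy prime is not contained in~$\mathfrak p$, so $\mathcal A^{\bullet}(X)_{\mathfrak p}$ is a complex of length $\operatorname{ht}\mathfrak p$ over the regular local ring $R_{\mathfrak p}$ to which the same duality applies. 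It follows that the localized augmented complex is exact in positions $-1,\dots,k-2$ if and only if $\operatorname{depth} M_{\mathfrak p}\ge\min(k,\operatorname{ht}\mathfrak p)$.

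To conclude I would invoke the Evans--Griffith characterization of syzygies: over the regular ring~$R$, a finitely generated graded module~$M$ is a $k$-th syzygy if and only if $\operatorname{depth} M_{\mathfrak p}\ge\min(k,\operatorname{ht}\mathfrak p)$ for every prime~$\mathfrak p$. Since exactness of a complex of finitely generated modules may be tested after localization at every prime, this together with the previous step shows that the augmented Atiyah--Bredon complex is exact in positions $-1,\dots,k-2$ exactly when $M$ is a $k$-th syzygy. For $k=2$ the complex in question is the Chang--Skjelbred sequence, and a second syzygy over the polynomial --- hence normal --- domain~$R$ is the same thing as a reflexive module, which yields the first assertion. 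The main obstacle is the Poincar\'e--Atiyah--Bredon duality of the third step: turning Poincar\'e duality on~$X$ into a precise, localization-compatible self-duality of $\mathcal A^{\bullet}(X)$ and correctly identifying the dual complex as one computing $\operatorname{Ext}_{R}(M,R)$ demands careful bookkeeping with the equivariant fundamental class and the various degree shifts, together with a reduction of the general manifold to the compact orientable case.
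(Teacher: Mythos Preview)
The paper does not prove this theorem; it simply cites \cite[Thm.~1.1]{AlldayFranzPuppe:orbits1}. Your outline follows the broad architecture of that reference---set up the Atiyah--Bredon complex, identify its cohomology with Ext modules, then invoke the depth characterization of syzygies---but it diverges from the actual argument at the decisive step, and the divergence is a genuine gap.

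In \cite{AlldayFranzPuppe:orbits1} the identification is
\[
  H^{i}\bigl(\mathcal A^{\bullet}(X)\bigr) \cong \operatorname{Ext}^{i}_{R}\bigl(H^{T}_{*}(X),R\bigr),
\]
with equivariant \emph{homology} on the right, and it is obtained from the structure of the (singular) Cartan model together with a universal-coefficient/spectral-sequence argument that works for any nice $T$-space. No Poincar\'e duality on~$X$ enters. You instead claim a self-duality of $\mathcal A^{\bullet}(X)$ coming from equivariant Poincar\'e duality on~$X$, which would let you replace $H^{T}_{*}(X)$ by $M=\HT^{*}(X)$ directly. That step requires $X$ to be a Poincar\'e duality space (compact orientable, or a rational Poincar\'e duality space), a hypothesis absent from the theorem as stated here. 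You flag this as an obstacle and promise a ``reduction of the general manifold to the compact orientable case'', but no such reduction is given, and none is apparent: the theorem is genuinely used for open manifolds---for instance the examples in \cite[Sec.~6.1]{AlldayFranzPuppe:orbits1} obtained by deleting fixed points from $(S^{2})^{r}$---where your duality argument is simply unavailable.

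So the issue is twofold: you have silently swapped $H^{T}_{*}(X)$ for $\HT^{*}(X)$, and the unproved reduction you invoke to justify that swap is precisely the missing content. A correct argument either works with equivariant homology throughout (as in \cite{AlldayFranzPuppe:orbits1}), or restricts the statement to Poincar\'e duality spaces (which is the setting of the companion paper \cite{AlldayFranzPuppe:orbits4}, where the self-duality of the Atiyah--Bredon complex you describe is indeed established---but as an additional result, not as the route to the syzygy theorem).
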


See~\cite[Thm.~1.1]{AlldayFranzPuppe:orbits1}.
The additional maps in the sequence above are the connecting homomorphisms for the triples~\((X_{i+1},X_{i},X_{i-1})\).

Recall that an \(R\)-module is reflexive if the canonical map to its double-dual is an isomorphism.
Syzygies are a notion from commutative algebra that interpolates between torsion-free and free modules,
see Section\nobreakspace \ref {sec:char-syz} for the precise definition.
% Any \(R\)-module is a zeroeth syzygy.
The first syzygies over~\(R\) are exactly the torsion-free modules,
the second syzygies the reflexive ones and the \(r\)-th syzygies the free ones.

As a corollary
(\cf~the comment following~\cite[Thm.~5.7]{AlldayFranzPuppe:orbits1})
we get the result of Atiyah and Bredon~\cite[Thm.]{Bredon:1974} that the sequence
\begin{multline}
  0 \longrightarrow
  \HT^{*}(X) \longrightarrow
  \HT^{*}(X_{0}) \longrightarrow
  \HT^{*+1}(X_{1},X_{0}) \\ \longrightarrow
  \HT^{*+2}(X_{2},X_{1}) \longrightarrow
  \dots \longrightarrow
  \HT^{*+r}(X_{r},X_{r-1}) \longrightarrow
  0
\end{multline}
is exact if and only if \(\HT^{*}(X)\) is free over~\(R\),
which strengthens the Chang--Skjelbred theorem.

It is not difficult to construct \(T\)-manifolds such that \(\HT^{*}(X)\) is a \(k\)-th syzygy for~\(k<r\).
For example,
the usual rotation action of~\(S^{1}\) on~\(S^{2}\) gives an action of~\(T\) on~\((S^{2})^{r}\)
such that \(\HT^{*}(X)\) is free over~\(R\).
By suitably removing two fixed points, any syzygy order less than~\(r\) can be realized \cite[Sec.~6.1]{AlldayFranzPuppe:orbits1}.

The situation becomes much more intriguing if one looks at compact orientable \(T\)-manifolds.
For such an~\(X\), another result of Allday--Franz--Puppe says that
if \(\HT^{*}(X)\) is a syzygy of order~\(\ge r/2\),
then it is actually free over~\(R\)
% not free over~\(R\), then its syzygy order is strictly less than~\((r+1)/2\)
\cite[Cor.~1.4]{AlldayFranzPuppe:orbits1}.

It already appears very difficult to construct compact orientable \(T\)-manifolds
such that \(\HT^{*}(X)\) is torsion-free, but not free over~\(R\). The first such examples
were the `mutants of compactified representations' given by Franz--Puppe in~2008 \cite[Sec.~4]{FranzPuppe:2008}.
Recently, the first author found a family of \(T\)-manifolds,
the so-called big polygon spaces, that generalize one of the mutants
to arbitrary syzygies \cite{Franz:maximal}. We recall the definition.

Let \(\ell\in\R^{r}\), called a \newterm{length vector} in this context.
We assume that \(\ell\) is \newterm{generic}, meaning that
\begin{equation}
  \sum_{j\in J}\ell_{j} \ne \sum_{j\notin J}\ell_{j}
\end{equation}
for any subset~\(J\subset\{1,\dots,r\}\).
Depending on which side dominates, \(J\) is called \newterm{\(\ell\)-long }or \newterm{\(\ell\)-short}.

Let \(p\),~\(q\ge1\).
The \newterm{big polygon space}~\(X(\ell)=X_{p,q}(\ell)\) is the real algebraic subvariety of~\(\C^{(p+q)r}\)
defined by the equations
\begin{equation}
  \left\{\:
  \begin{aligned}
  \|u_{j}\|^{2} + \|z_{j}\|^{2} &= 1 && \text{for any~\(1\le j\le r\),} \\
  \ell_{1}u_{1}+ \dots + \ell_{r}u_{r} &= 0\mathrlap{,}
  \end{aligned}
  \right.
\end{equation}
where~\(u_{1}\),~\dots,~\(u_{r}\in\C^{p}\) and~\(z_{1}\),~\dots,~\(z_{r}\in\C^{q}\).
Since \(\ell\) is generic,
\(X(\ell)\) is a compact orientable manifold with a smooth action of~\(T=(S^{1})^{r}\)
given by scalar multiplication of the \(z\)-variables,
\begin{equation}
  % (g_{1},\dots,g_{r})\cdot(u,z)
  g\cdot(u,z)
  = (u, g_{1}z_{1},\dots,g_{r}z_{r}),
\end{equation}
see~\cite[Lemma~2.1]{Franz:maximal}.
The fixed point set~\(X(\ell)^{T}\) is the `space of polygons'~\(E_{2p}(\ell)\) studied by Farber--Fromm~\cite{FarberFromm:2013}.

It turns out that \(\HT^{*}(X(\ell))\) is never free over~\(R\).
In fact, \(\HT^{*}(X(\ell))\) is not a syzygy of order
\begin{equation}
  \mu(\ell) = \min\{\,\sigma_{\ell}(J) \mid \text{\(J\subset\{1,\dots,r\}\) is \(\ell\)-long and \(\sigma_{\ell}(J)>0\)}\,\}
\end{equation}
where
\begin{equation}
  \sigma_{\ell}(J) = \# \{\, j\in J \mid \text{\(J\setminus j\) is \(\ell\)-short}\,\},
\end{equation}
see~\cite[Prop.~6.3]{Franz:maximal}.
Our main result confirms the conjecture made in~\cite[Conj.~6.6]{Franz:maximal}.

\begin{theorem}
  \label{thm:main}
  The syzygy order of~\(\HT^{*}(X(\ell))\) over~\(R\) equals \(\mu(\ell)-1\).
\end{theorem}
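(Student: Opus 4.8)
Throughout, write \(M=\HT^{*}(X(\ell))\). By \cite[Prop.~6.3]{Franz:maximal} the module~\(M\) is not a \(\mu(\ell)\)-th syzygy, so it remains to prove the lower bound: \(M\) is a \((\mu(\ell)-1)\)-st syzygy over~\(R\). Verifying this directly against the higher Chang--Skjelbred sequence of the skeleta~\(X(\ell)_{i}\) seems cumbersome, so the plan is to first set up a more flexible criterion. For a \(T\)-space~\(Y\) with \(\dim_{\R}H^{*}(Y)<\infty\) let \(\mathcal A(Y)\) be the arrangement of subspaces \(\mathfrak a_{K}=\ker\bigl(H^{2}(BT)\to H^{2}(BK)\bigr)\) of~\(H^{2}(BT)\), where \(K\) runs over the subtori of~\(T\) occurring as isotropy groups on~\(Y\). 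I would introduce a notion of a linearly independent family \((a_{1},\dots,a_{i})\) in~\(H^{2}(BT)\) being \emph{\(Y\)-adapted} --- roughly, in general position relative to the arrangement~\(\mathcal A(Y)\), but allowing members to lie inside subspaces of the arrangement in a controlled way --- and prove: \(\HT^{*}(Y)\) is a \(j\)-th syzygy over~\(R\) if and only if every \(Y\)-adapted family of size at most~\(j\) is an \(\HT^{*}(Y)\)-regular sequence. For the trivial arrangement ``\(Y\)-adapted'' reduces to ``linearly independent'', so this refines \cite[Thm.~1.1]{AlldayFranzPuppe:orbits1}.

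To prove this criterion I would use, on the commutative-algebra side, that over the polynomial ring~\(R\) a finitely generated graded module~\(N\) is a \(j\)-th syzygy precisely when it satisfies Serre's condition~\(\widetilde S_{j}\) (equivalently, in terms of the codimensions of the modules \(\operatorname{Ext}^{i}_{R}(N,R)\)); and, on the topological side, that for \(N=\HT^{*}(Y)\) the associated primes of~\(N\) and of its iterated quotients \(N/(a_{1},\dots,a_{s})N\) all lie on the isotropy arrangement, i.e.\ are of the form~\((\mathfrak a_{K})R\). The adaptedness condition is to be chosen exactly so that the two sides match: a \(Y\)-adapted family fails to be regular precisely when some step~\(s\) meets an associated prime~\((\mathfrak a_{K})R\), and this is possible precisely when a corresponding codimension inequality is violated. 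The argument runs by localizing at, and passing to fixed sets of, the subtori~\(K\) occurring in the arrangement --- replacing \((T,Y)\) by \((T/K,Y^{K})\) and lowering the rank of~\(T\) --- and inducting on that rank; adaptedness is the hypothesis that keeps both \(\widetilde S_{j}\) and the collection of relevant families under control during this reduction.

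With the criterion available, the remaining work concerns~\(X(\ell)\). A point~\((u,z)\in X(\ell)\) has isotropy group \(K_{S}=\{g\in T\mid g_{j}=1\text{ for }j\in S\}\) with \(S=\supp z\), and genericity of~\(\ell\) shows that the subsets~\(S\) which occur are exactly those whose complement contains no \(\ell\)-long singleton; hence \(\mathcal A(X(\ell))\) is (a truncation of) the coordinate arrangement in \(H^{2}(BT)=\bigoplus_{j}\R\,t_{j}\), and ``\(X(\ell)\)-adapted'' becomes an explicit combinatorial condition on families of linear forms. One then checks that every \(X(\ell)\)-adapted family of length~\(\mu(\ell)-1\) is \(M\)-regular, using the explicit presentation of~\(M\) in \cite{Franz:maximal} and the fact that each fixed set \(X(\ell)^{K_{S}}=\{(u,z)\in X(\ell)\mid z_{j}=0\text{ for }j\notin S\}\) is again a big polygon space (for the smaller torus~\(T/K_{S}\), with the edges outside~\(S\) of fixed length), so that the induction from the criterion applies. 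The number~\(\mu(\ell)\) surfaces through the combinatorics of \(\ell\)-long sets: the tightest obstruction to regularity occurs at a critical \(\ell\)-long subset~\(J\) attaining \(\mu(\ell)=\min_{J}\sigma_{\ell}(J)\), where an adapted family first becomes a zero-divisor sequence at the \(\mu(\ell)\)-th step and not before.

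The main obstacle is the refined criterion itself: the ``adapted'' condition must be defined delicately enough to be simultaneously weak enough that the length-\((\mu(\ell)-1)\) families over~\(X(\ell)\) are all adapted and manifestly regular, and strong enough that the ``only if'' direction holds for all~\(Y\); the heart of the matter is controlling how the isotropy arrangement and the associated primes of the quotients \(M/(a_{1},\dots,a_{s})M\) transform under the passage to fixed sets. A secondary, more computational, difficulty is the verification itself, which calls for a uniform understanding --- in terms of the function~\(\sigma_{\ell}\) on \(\ell\)-long subsets --- of how regularity of an adapted family interacts with all strata of~\(X(\ell)\) at once.
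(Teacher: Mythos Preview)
Your overall strategy---set up a refined regularity criterion tailored to the isotropy arrangement, then verify it for~\(X(\ell)\)---is exactly the paper's. The paper's criterion (Theorem~\ref{thm:char-syzygy}) is crisper than your sketch: one fixes a \emph{\(k\)-localizing} subset \(S\subset\RZtwo\) (containing, for each~\(x\), at least \(\min(k,\codim T_x)\) linearly independent elements of~\(\ker p_x\)) and concludes that \(\HT^*(X)\) is a \(k\)-th syzygy iff every linearly independent sequence in~\(S\) of length~\(\le k\) is regular. Its proof runs by induction on~\(k\) via the localization theorem and passage to a subtorus \(T'\subset T\) acting on the \emph{same} space~\(X\), not to a quotient torus acting on a fixed set~\(X^K\); no Serre condition or associated-prime bookkeeping is needed. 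For \(X(\ell)\) the isotropies are coordinate subtori, so \(S=\{t_1,\dots,t_r\}\) works for every~\(k\), and by Lemma~\ref{thm:desc-HTX} one is reduced to showing that every sequence of \(k<\mu(\ell)\) distinct variables is regular on the explicit module \(M(\ell)=\CC(\Delta;R)/\DD(\Delta_+;R)\).

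The genuine gap is this verification, which your proposal leaves as a sketch resting on a false premise. The paper's argument (proof of Theorem~\ref{thm:syzord-mu-ell}) is a purely algebraic extremal argument on~\(M(\ell)\): assuming \(t_i^q\) is a zero-divisor modulo \((t_{i_1},\dots,t_{i_{k-1}})\), one chooses a witness~\(c\) with \(\lambda(c)\) maximal and then with the fewest monomials in its critical coefficient~\(c_\sigma\); from the long simplex~\(\hat\sigma\) one produces a long \(\hat\beta=\hat\sigma\cup j'\) \emph{all} of whose facets are long---this is precisely where \(\mu(\ell)\ge k+1\) is used, forcing \(\sigma_\ell(\hat\beta)=0\)---and the identity \(d\,d\,\hat\beta=0\) then yields a strictly ``better'' counterexample, a contradiction. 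No reduction to smaller big polygon spaces occurs, and your proposed reduction does not work as stated: \(X(\ell)^{K_S}=\{z_j=0\text{ for }j\notin S\}\) is \emph{not} a big polygon space for a shorter length vector, since the vectors~\(u_j\) with \(j\notin S\) remain free unit vectors in~\(\C^p\) still entering the closing equation \(\sum_j\ell_j u_j=0\); ``edges outside~\(S\) of fixed length'' misdescribes the fixed set. The combinatorial mechanism that makes \(\mu(\ell)-1\) the exact threshold is the substance of the theorem, and it is missing from your plan.
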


Our proof of Theorem\nobreakspace \ref {thm:main} is purely algebraic and uses the description
of~\(\HT^{*}(X(\ell))\) given in~\cite[Lemma~4.4]{Franz:maximal}.
It is inspired by the proof appearing in
the second author's Ph.\,D.~thesis~\cite{Huang:2018},
which in turn is based on the quotient criterion for syzygies developed in~\cite{Franz:orbits3}
and on Morse--Bott theory for manifolds with corners.

The largest possible syzygy order for~\(r=2m+1\) and~\(r=2m+2\) is \(m\).
It is known that this syzygy order is realized
by an essentially unique length vector
which for odd~\(r\) corresponds to the equilateral case \(\ell=(1,\dots,1)\),
see~\cite[Cor.~6.4]{Franz:maximal} and also Proposition\nobreakspace \ref {thm:syzymy-m-characterization}.
From Theorem\nobreakspace \ref {thm:main} we deduce that syzygies of the next smaller order are also unique or at least almost unique.

\begin{corollary}
  \label{thm:char-syzygy-intro}
  Let \(r\ge3\), and let \(\ell\in\R^{r}\) be a generic length vector with weakly increasing non-negative components.
  \begin{enumerate}
  \item Assume that \(r=2m+1\) is odd. Then \(\HT^{*}(X(\ell))\) is a syzygy of order~\(m-1\) if and only if
    \(X(\ell)\) is equivariantly diffeomorphic to~\(X(0,0,1,\dots,1)\).
  \item Assume that \(r=2m+2\) is even. Then \(\HT^{*}(X(\ell))\) is a syzygy of order~\(m-1\) if and only if
    \(X(\ell)\) is equivariantly diffeomorphic to~\(X(0,0,0,1,\dots,1)\) or to~\(X(1,1,1,2,\dots,2)\).
  \end{enumerate}
\end{corollary}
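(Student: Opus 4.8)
The plan is to deduce Corollary~\ref{thm:char-syzygy-intro} from Theorem~\ref{thm:main} by a purely combinatorial analysis of the function~$\mu$. Normalizing a generic length vector to have weakly increasing non-negative components, the syzygy order of~$\HT^*(X(\ell))$ equals $\mu(\ell)-1$, so the condition ``syzygy of order~$m-1$'' is equivalent to $\mu(\ell)=m$. Since the maximal value of~$\mu$ over all generic length vectors in~$\R^r$ is $m+1$ (attained uniquely by the distinguished vector recalled before the corollary, \cf\ Proposition~\ref{thm:syzymy-m-characterization}), we are asking to classify those~$\ell$ at which $\mu$ takes its second-largest possible value. First I would record the basic monotonicity properties of~$\sigma_\ell$: whenever $J$ is $\ell$-long, $\sigma_\ell(J)$ counts the indices $j\in J$ whose removal makes~$J$ short, and one has elementary bounds relating $\sigma_\ell(J)$, $|J|$, and the number of ``small'' coordinates. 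In particular the minimal-cardinality $\ell$-long sets are exactly the ones likely to realize the minimum defining~$\mu(\ell)$, so the combinatorics reduces to understanding, for the given~$\ell$, which subsets are long and how many of their elements are ``critical''.

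Next I would translate $\mu(\ell)\ge m$ into a statement that \emph{every} $\ell$-long set~$J$ with $\sigma_\ell(J)>0$ satisfies $\sigma_\ell(J)\ge m$, and then argue that this forces the length vector to be very close to equilateral. The key mechanism: if two coordinates $\ell_a<\ell_b$ differ enough that some long set is ``witnessed'' by only a few critical indices, then $\mu(\ell)$ drops below~$m$. Concretely, I expect to show that $\mu(\ell)\ge m$ implies all but at most two or three of the~$\ell_j$ are equal, the exceptional ones being forced into one of the two listed patterns. For odd $r=2m+1$, scaling and the genericity condition should pin the vector down to $(0,0,1,\dots,1)$ after normalization; for even $r=2m+2$, the two families $(0,0,0,1,\dots,1)$ and $(1,1,1,2,\dots,2)$ arise because there are exactly two ``nearly equilateral'' combinatorial types of generic length vectors, distinguished by whether a long triple or a long $(m{+}1)$-set controls the minimum. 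I would verify in each listed case that $\mu(\ell)=m$ by exhibiting an $\ell$-long set~$J$ with $\sigma_\ell(J)=m$ and checking no long set with positive $\sigma_\ell$ has smaller value; this is a short direct computation.

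Finally I would handle the geometric half of the ``if and only if'': $\mu(\ell)$ depends only on the combinatorial type of~$\ell$ (which subsets are long), and length vectors of the same combinatorial type give equivariantly diffeomorphic big polygon spaces --- a fact I would cite from~\cite{Franz:maximal} together with the observation that the defining equations depend on~$\ell$ only up to the chamber structure. Thus ``$X(\ell)$ equivariantly diffeomorphic to the model'' is equivalent to ``$\ell$ has the same combinatorial type as the model'', and the corollary follows once the combinatorial classification above is complete. The main obstacle I anticipate is the case analysis in the even case: ruling out spurious ``nearly equilateral'' vectors and showing that exactly the two families survive requires carefully tracking how $\sigma_\ell$ behaves on both the minimal long sets and the long sets of cardinality $m+1$, and confirming that no third pattern sneaks in. Getting the genericity inequalities to cut down to precisely two families --- rather than one or three --- is the delicate point, and I would organize it by first bounding the number of distinct values among the~$\ell_j$ and then enumerating the finitely many remaining possibilities.
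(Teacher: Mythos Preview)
Your overall strategy matches the paper's: reduce to the combinatorial condition \(\mu(\ell)=m\) via Theorem~\ref{thm:main}, verify the listed vectors directly, and then classify all \(\ell\) with \(\mu(\ell)=m\) up to combinatorial equivalence (which gives the equivariant diffeomorphism statement by~\cite{Franz:maximal}). That much is right.

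However, your proposed execution of the classification diverges from the paper's and contains a concrete error. The paper does not argue by bounding how many \(\ell_j\) are ``equal'' or by measuring how much two coordinates ``differ''; equality of coordinates is not an invariant of the chamber, and the proof must proceed purely in terms of which subsets are long. The paper's route is: take a long subset \(J\) of minimal size and observe \(m\le\#J\le m+1\). If \(\#J=m=\mu(\ell)\), a short auxiliary lemma (Lemma~\ref{thm:J-mu-ell}) forces \(\ell\sim(0,\dots,0,1,\dots,1)\) with \(2m-1\) ones, which settles the odd case and half of the even case. The remaining even case (\(\#J=m+1\)) is handled by a ``distinguished set'' argument: one shows that every subset of size \(m+1\) containing \(\{2,3\}\) must be short, because otherwise a set \(J'=J\cup j\) of size \(m+2\) would have \(\sigma_\ell(J')\le m-1\), contradicting \(\mu(\ell)=m\); this pins down \(\ell\sim(1,1,1,2,\dots,2)\).

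Your description of the even-case dichotomy as ``whether a long triple or a long \((m{+}1)\)-set controls the minimum'' is incorrect: neither \((0,0,0,1,\dots,1)\) nor \((1,1,1,2,\dots,2)\) has a long triple for \(m\ge2\). The actual distinction is whether the minimal long sets have size \(m\) or \(m+1\). Without something playing the role of Lemma~\ref{thm:J-mu-ell} and the distinguished-set argument, your plan of ``bounding the number of distinct values and enumerating'' is not yet a proof; in particular, it is unclear how you would rule out further chambers in the \(\#J=m+1\) branch without the \(\sigma_\ell(J')\le m-1\) trick.
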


To relate our algebraic reasoning with equivariant cohomology, we develop a refined criterion
for syzygies in equivariant cohomology which is of independent interest. It involves the notion
of a \newterm{\(k\)-localizing subset}~\(S\subset H^{2}(BT)\) for a given `nice' \(T\)-space~\(X\),
see again Section\nobreakspace \ref {sec:char-syz} for the definitions. For a big polygon space~\(X(\ell)\),
the set~\(\{t_{1},\dots,t_{r}\}\) of indeterminates of~\(R\) is \(k\)-localizing for any~\(k\).

\begin{theorem}
  \label{thm:refined-crit-intro}
  % Let \(X\) be a nice \(T\)-space, and
  Let \(S\subset H^{2}(BT;\Z)\) be \(k\)-localizing for~\(X\) for some~\(k\ge 1\).
  Then \(\HT^{*}(X)\) is a \(k\)-th syzygy
  over~\(R\) if and only if any linearly independent
  sequence in~\(S\) of length at most~\(k\) is \(\HT^{*}(X)\)-regular.
\end{theorem}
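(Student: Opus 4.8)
The plan is to prove Theorem~\ref{thm:refined-crit-intro} by comparing the proposed regularity condition on $S$ with the known topological characterization of syzygies via the partial Atiyah–Bredon sequence (the Allday–Franz–Puppe theorem quoted above), using the algebraic machinery of the Koszul / Čech complex and localization at the primes generated by elements of $S$. The point of a "$k$-localizing" set (whose definition lives in Section~\ref{sec:char-syz}, which I am assuming) should be that localizing $\HT^*(X)$ and the relevant relative cohomology groups at the multiplicative sets generated by sub-sequences of $S$ recovers the contributions of the low-dimensional orbit strata $X_i$; so the combinatorial/geometric data of the isotropy groups is faithfully seen by $S$. Granting that, the two sides of the equivalence can both be reformulated as statements about the vanishing of certain local cohomology modules, and the equivalence becomes a purely homological comparison.

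First I would recall the commutative-algebra fact that for a finitely generated graded module $M$ over the polynomial ring $R$, being a $k$-th syzygy is equivalent to the vanishing $H^i_{\mathfrak m}(M) = 0$ for $i < k + \operatorname{depth}$-type conditions — more precisely, to $\operatorname{depth}_{\mathfrak p} M \ge \min(k, \dim R_{\mathfrak p})$ for all primes $\mathfrak p$, which in turn (by the Auslander–Bridger / Ischebeck-type characterization) is equivalent to $\Tor$- or $\operatorname{Ext}$-vanishing that can be tested after localization. The key reduction is: $M$ is a $k$-th syzygy iff for every prime $\mathfrak p$ of height $\le k$, $\operatorname{depth} M_{\mathfrak p} \ge \operatorname{ht} \mathfrak p$, i.e. $M_{\mathfrak p}$ is a maximal Cohen–Macaulay $R_{\mathfrak p}$-module when $\operatorname{ht}\mathfrak p \le k$. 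Then I would argue that, because $S$ is $k$-localizing, it suffices to check this at the finitely many primes generated by subsets of $S$ of size $\le k$ — and that these are exactly the primes "visible" through the isotropy strata $X_i$ with $i \le k$. This is where the hypothesis does its work: a general prime of small height need not be generated by elements of $H^2(BT)$ at all, let alone by elements of $S$, so the definition of $k$-localizing must be precisely what guarantees the restriction to these special primes loses no information about the syzygy order up to $k$.

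The second half is the translation of "$M_{\mathfrak p}$ is a depth-$\operatorname{ht}\mathfrak p$ module for $\mathfrak p = (s_1,\dots,s_j)$, $s_i \in S$ independent, $j \le k$" into "$s_1,\dots,s_j$ is an $M$-regular sequence". Since $s_1, \dots, s_j$ are linearly independent elements of $H^2(BT)$, they form part of a system of parameters for $R$ and generate a prime $\mathfrak p$ of height exactly $j$; localizing, $R_{\mathfrak p}$ is a regular local ring of dimension $j$ and $M_{\mathfrak p}$ being maximal Cohen–Macaulay means $\operatorname{depth}_{\mathfrak p} M_{\mathfrak p} = j$, which by the standard local-to-global statement for depth and by prime avoidance is equivalent to $s_1,\dots,s_j$ being an $M$-regular sequence (here one also uses that $M$ has no embedded primes outside $V(\mathfrak p)$ relevant to the sequence, or bootstraps the argument inductively on $j$ using that lower syzygy orders are already established). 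Running $j$ from $1$ up to $k$ and invoking the case distinction of whether $M$ is a $(k-1)$-st syzygy at the previous stage gives the full "if" direction; the "only if" direction is the easy one, since any such independent sequence in $S$ generates a prime of height $\le k$ and a $k$-th syzygy is automatically deep enough there.

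**The main obstacle** I anticipate is the "if" direction: showing that regularity of sub-sequences of $S$ alone forces the global syzygy condition, rather than merely the condition at the primes generated by $S$. One cannot expect an arbitrary height-$j$ prime to be handled by $S$, so the argument must route through the topological sequence — one shows that exactness of the partial Atiyah–Bredon complex up to stage $k$ is detected stratum-by-stratum, each stratum $X_i$ contributing cohomology supported on primes of the form $(s : s \in S')$ for suitable $S' \subseteq S$, and that the $k$-localizing hypothesis is exactly the statement that these are the only primes that can obstruct exactness in that range. Making this last implication precise — essentially, that "$k$-localizing" $\Rightarrow$ the support of the obstruction modules lies in $V(s_1)\cup\cdots$ for $s_i\in S$ — is the crux, and it will rely on whatever structural description of isotropy groups is built into the definition in Section~\ref{sec:char-syz}. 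A secondary technical point is bookkeeping the degree shifts and the graded structure throughout, but that is routine.
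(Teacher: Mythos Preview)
Your approach differs substantially from the paper's, and the step you yourself flag as the ``crux'' is left as a genuine gap.

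The paper does \emph{not} route through the Atiyah--Bredon sequence, local cohomology, or a depth-at-primes characterization. It argues by direct induction on~$k$. The base case~$k=1$ uses the localization theorem: since $S$ is $1$-localizing, inverting the multiplicative set~$\hat S$ generated by~$S$ makes $\HT^*(X)\to\HT^*(X^T)$ an isomorphism; the hypothesis says this localization is injective on~$\HT^*(X)$, so $\HT^*(X)$ embeds in a free module and is torsion-free. For the inductive step the paper starts from an \emph{arbitrary} linearly independent sequence $\aa=(a_1,\dots,a_k)$ in~$H^2(BT;\Z)$ that is not $M$-regular (such a sequence exists by Lemma~\ref{thm:syzygy-lin-indep-seq}) and manufactures one lying entirely in~$S$. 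The key device is Lemma~\ref{thm:reduction}: if $\aa'=(a_1,\dots,a_{k-1})$ is $M$-regular, then $M/\aa'\cong H_{T(\aa')}^*(X)$ is again the equivariant cohomology of~$X$ for the subtorus~$T(\aa')$. By Lemma~\ref{thm:localizing-quotient} the image of~$S$ is $1$-localizing there, so the base case yields a zero-divisor~$b\in S$ for~$M/\aa'$. One permutes to $(b,\aa')$, reduces modulo~$b$ (again an equivariant cohomology, now for the $T(b)$-action), and applies the induction hypothesis to replace~$\aa'$ by a sequence in~$S$. No prime of~$R$ other than the linear ones generated by the sequences at hand ever enters the argument.

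Your route could in principle be made to work, but as written it is a plan, not a proof. The assertion ``because $S$ is $k$-localizing, it suffices to check [depth] at the finitely many primes generated by subsets of $S$ of size~$\le k$'' is precisely the content of the theorem, and your proposed mechanism---support analysis of the obstruction modules in the Atiyah--Bredon sequence---is only gestured at. To carry it out you would need to show that for $i\le k$ every associated prime of~$\HT^*(X_i,X_{i-1})$ is generated by elements of~$S$ (this does follow from the $k$-localizing hypothesis, since for $\codim T_x\le k$ the kernel of~$p_x$ is then \emph{spanned} by elements of~$S$), and then to push that through the long exact sequences to control the syzygy order of~$\HT^*(X)$ itself. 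Even granting this, the passage ``$M_{\mathfrak p}$ has depth~$j$ $\Leftrightarrow$ $s_1,\dots,s_j$ is $M$-regular'' you sketch is not straightforward in the direction you need; regularity after localization at~$\mathfrak p$ does not give global regularity without further argument. The paper sidesteps the entire support analysis by exploiting that quotients by regular elements of~$S$ are again equivariant cohomologies, which allows a self-contained induction that never localizes at a non-linear prime and never invokes the Atiyah--Bredon sequence.
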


The proof of Theorem\nobreakspace \ref {thm:refined-crit-intro} appears in Section\nobreakspace \ref {sec:char-syz}.
Theorem\nobreakspace \ref {thm:main} is proven in Section\nobreakspace \ref {sec:big-polygon} and Corollary\nobreakspace \ref {thm:char-syzygy-intro}
in Section\nobreakspace \ref {sec:high-syzygies}. In Section\nobreakspace \ref {sec:real} we state versions of our results
for actions of \(2\)-tori~\((\Z_{2})^{r}\) and certain `real' analogues of big polygon spaces
which have recently been studied by Puppe~\cite{Puppe:2018}.

\begin{acknowledgements}
  We thank Dirk Schütz for sharing his Java code to enumerate all chambers of generic length vectors
  in a given dimension. It enabled us to verify the syzygy order conjecture in dimension~\(10\).
\end{acknowledgements}

\section{A refined characterization of syzygies}
\label{sec:char-syz}

From now on,
all cohomology is taken with coefficients in a field~\(\kk\) of characteristic~\(0\)
unless stated otherwise.
Let \(T\cong(S^{1})^{r}\) be a torus; we write \(R=H^{*}(BT)\).

Recall that any element~\(a\in \RZtwo\cong H^{1}(T;\Z)\) can be interpreted
as a character~\(\chi_{a}\colon T\to S^{1}\).
We write \(t_{i}\in\RZtwo\) for the element corresponding
to the \(i\)-th coordinate~\(T\to S^{1}\), so that \(R=\kk[t_{1},\dots,t_{r}]\).
For any linearly independent sequence~\(\aa=(a_{1},\dots,a_{m})\) in~\(\RZtwo\)
we write \(T(\aa)\subset T\) for the identity component of the intersection
of~\(\ker\chi_{a_{1}}\),~\dots,~\(\ker\chi_{a_{m}}\), which is of codimension~\(m\).
Given an \(R\)-module~\(M\), we also write \(M/\aa=M/(a_{1},\dots,a_{m})M\).

Let \(X\) be a \(T\)-space.
We say that \(X\) is \newterm{nice}
if it is Hausdorff, second-countable, locally compact and locally contractible,
see~\cite[Sec.~3.1]{AlldayFranzPuppe:orbits1}.
For instance, \(X\) can be a \(T\)-manifold or \(T\)-orbifold or a complex algebraic variety with an algebraic action of~\((\C^{\times})^{r}\).
We additionally assume that \(H^{*}(X)\) is finite-dimensional and that only finitely many subtori of~\(T\)
occur as identity components of isotropy groups in~\(X\). In the examples just mentioned,
this last condition is redundant, see~\cite[Thm.~7.7]{Franz:nonab}.% and~\cite[Sec.~II.2.4, Satz~2]{Kraft:1985}.
\footnote{The algebraic case reduces to the manifold case: Algebraic varieties
  have finite Betti sum and can be decomposed into finitely many smooth varieties,
  stable with respect to an algebraic action.}
(We have implicitly used this in the introduction already.)

Let \(k\ge0\). A finitely generated \(R\)-module~\(M\) is called a \newterm{\(k\)-th syzygy} % over~\(R\)
if any regular sequence in~\(R\) of length at most~\(k\) is also \(M\)-regular.
(See~\cite[Sec.~2.3]{AlldayFranzPuppe:orbits1} for equivalent definitions of syzygies.)
If \(M\) is a syzygy of order~\(k\), but not of order~\(k+1\), then we say that the \newterm{syzygy order} of~\(M\) equals \(k\).

In our topological context, it is enough to consider sequences of linear elements.

\begin{lemma}
  \label{thm:syzygy-lin-indep-seq}
  Let \(X\) be a nice \(T\)-space and let \(k\ge0\). Then \(\HT^{*}(X)\) is a \(k\)-th syzygy
  if and only if every linear independent sequence in~\(\RZtwo\) of length at most~\(k\)
  is \(\HT^{*}(X)\)-regular.
\end{lemma}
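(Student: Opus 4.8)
The plan is to reduce the general definition of a $k$-th syzygy---which quantifies over all regular sequences in $R$ of length $\le k$---to the special case of linearly independent sequences of \emph{degree-$2$} elements, exploiting the grading on $\HT^{*}(X)$ and the fact that $R$ is a standard-graded polynomial ring over a field of characteristic zero. One direction is immediate: a linearly independent sequence $(a_{1},\dots,a_{m})$ in $\RZtwo$ with $m\le k$ is in particular part of a homogeneous system of parameters for $R$, hence a regular sequence in $R$; if $\HT^{*}(X)$ is a $k$-th syzygy, it is $\HT^{*}(X)$-regular by definition. So the content is the converse.

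For the converse, assume every linearly independent degree-$2$ sequence of length $\le k$ is $\HT^{*}(X)$-regular, and let $(f_{1},\dots,f_{m})$ be an arbitrary $R$-regular sequence with $m\le k$; I must show it is $\HT^{*}(X)$-regular. First I would reduce to homogeneous sequences: by the standard prime-avoidance/generic-change-of-basis argument over an infinite field, $R$-regularity and $\HT^{*}(X)$-regularity of a sequence are governed by the associated graded data, and one may replace an arbitrary regular sequence by a homogeneous one (or invoke that a finitely generated graded module is a $k$-th syzygy iff every \emph{homogeneous} regular sequence of length $\le k$ is regular on it; this is the content of the equivalent definitions cited to \cite[Sec.~2.3]{AlldayFranzPuppe:orbits1}). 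So assume each $f_{i}$ is homogeneous of positive even degree $2d_{i}$. Now the crucial geometric input: because only finitely many subtori of $T$ occur as identity components of isotropy groups, the module $\HT^{*}(X)$ is supported, away from the free locus, only on finitely many linear subspaces of $\operatorname{Spec} R$---namely the $\operatorname{Spec}(R/\aa)$ for the finitely many relevant $\aa\subset\RZtwo$. More precisely, by the localization theorem the $R$-module $\HT^{*}(X)$ becomes free after inverting a suitable product of characters, so its non-free part has support contained in a finite union of the linear subvarieties $V(T(\aa))$.

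The key step is then: a homogeneous sequence $(f_{1},\dots,f_{m})$ is $\HT^{*}(X)$-regular if and only if at each stage the ideal $(f_{1},\dots,f_{i})$ is not contained in any associated prime of $\HT^{*}(X)/(f_{1},\dots,f_{i-1})$, and all such associated primes lie among the finitely many linear primes $\mathfrak p_{\aa}=(a\in\RZtwo : a\notin T(\aa)^{\perp})\cdots$, i.e.\ the defining ideals of the $V(T(\aa))$. [Here I use that associated primes of a subquotient of $\HT^{*}(X)$ by a partial regular sequence still lie in the support of $\HT^{*}(X)$, together with the structure of that support.] Given this, I can replace $(f_{1},\dots,f_{m})$ by a linearly independent \emph{linear} sequence $(a_{1},\dots,a_{m})$ in $\RZtwo$ chosen to avoid exactly the same finite collection of linear primes: concretely, pick $a_{1}$ linear outside the finite set of linear primes containing no non-zero linear form is vacuous, so inductively pick $a_{i}\in\RZtwo$ avoiding all linear primes among $\operatorname{Ass}(\HT^{*}(X)/(a_{1},\dots,a_{i-1}))$, which is possible by prime avoidance over an infinite field as long as each such prime is \emph{proper}, equivalently as long as the original sequence $(f_{1},\dots,f_{m})$ was $R$-regular (this is where $m\le k$ and the regularity hypothesis on linear sequences feed in). By hypothesis this linear sequence is $\HT^{*}(X)$-regular, and since regularity of a homogeneous sequence on a finitely generated graded module depends only on which primes in $\operatorname{Ass}$ it avoids at each stage, the original sequence $(f_{1},\dots,f_{m})$ is $\HT^{*}(X)$-regular as well.

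The main obstacle is making precise the claim that associated primes of $\HT^{*}(X)$ and of its quotients by partial (linear) regular sequences are \emph{linear}---or at least that one may reduce to testing against linear primes---which is exactly the point where the ``nice'' hypothesis and the finiteness of isotropy subtori must be used, via the localization theorem. Once the support/associated-prime structure is pinned down to the finite family $\{V(T(\aa))\}$, the rest is a routine prime-avoidance argument exploiting that $\kk$ is infinite so that a generic linear form lies outside any proper homogeneous ideal, and that all data is graded so regularity is detected on associated primes.
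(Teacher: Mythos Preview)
Your argument has a genuine gap at the very point you yourself flag as ``the main obstacle.'' You assert that the associated primes of~\(\HT^{*}(X)/(f_{1},\dots,f_{i-1})\) lie among the finitely many linear primes~\(\mathfrak p_{K}\), justifying this by ``associated primes of a subquotient \dots\ still lie in the support of~\(\HT^{*}(X)\).'' But \(\operatorname{Supp}\HT^{*}(X)=\operatorname{Spec} R\) whenever \(X^{T}\ne\emptyset\), so this gives no control; and for non-linear~\(f_{j}\) the claim is simply false: already for \(M=R\) and \(f_{1}=t_{1}^{2}+t_{2}^{2}\) over~\(\kk=\R\) the quotient \(M/(f_{1})\) has the non-linear associated prime~\((t_{1}^{2}+t_{2}^{2})\). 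Your concluding step---that regularity of the constructed linear sequence~\((a_{1},\dots,a_{m})\) forces regularity of the original~\((f_{1},\dots,f_{m})\) ``since regularity depends only on which primes in~\(\operatorname{Ass}\) it avoids at each stage''---is therefore a non sequitur: the associated primes encountered along the \((f_{i})\)-filtration and along the \((a_{i})\)-filtration are in general unrelated, and you have built no bridge between them. (In the closing paragraph you retreat to claiming linearity only for quotients by \emph{linear} partial sequences; that weaker claim is indeed provable via the reduction~\(\HT^{*}(X)/\aa\cong H^{*}_{T(\aa)}(X)\), but it no longer says anything about the \((f_{i})\)-quotients you need.)

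The paper does not attempt such a direct prime-avoidance reduction. It instead invokes \cite[Thm.~5.7]{AlldayFranzPuppe:orbits1}, a result specific to equivariant cohomology, which establishes that \(\HT^{*}(X)\) is a \(k\)-th syzygy if and only if it is free over every subring~\(H^{*}(BT'')\subset R\) with \(T''\) a quotient torus of rank at most~\(k\); freeness of a bounded-below graded module over such a polynomial subring is then equivalent to regularity of any generating sequence~\((a_{1},\dots,a_{k})\subset\RZtwo\). The substantive topological input is packaged entirely in that citation, and the localization theorem alone does not replace it. Indeed, over~\(\kk=\R\) the module \(R\oplus R/(t_{1}^{2}+t_{2}^{2})\) has every nonzero element of~\(\RZtwo\) as a non-zero-divisor yet is not torsion-free, so some structural input beyond ``support contained in linear subvarieties'' is genuinely required.
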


\begin{proof}
  This is implicit in~\cite[Thm.~5.7]{AlldayFranzPuppe:orbits1}. There it is shown
  that \(\HT^{*}(X)\) is a \(k\)-th syzygy if and only if it is free over all subrings~\(H^{*}(BT'')\subset R\)
  where \(T''\) is a quotient of~\(T\) of rank~\(\le k\) (equivalently, equal to~\(k\) if~\(k\le r\)).
  These are exactly the subrings of~\(R\) that are generated by linearly independent sequences in~\(\RZtwo\) of length~\(\le k\).
  
  Let \(\aa\) be such a sequence and let \(T''\) be the corresponding quotient of~\(T\).
  Because the graded module~\(M=\HT^{*}(X)\) is bounded below,
  it is free over~\(R''=H^{*}(BT'')\) if and only if \(\aa\) is \(M\)-regular,
  see~\cite[Lemma p.~5]{Hochster:CM}.
  (The argument given there remains valid for non-finitely generated modules.)
\end{proof}

\begin{lemma}
  \label{thm:reduction}
  Let \(X\) be a \(T\)-space.
  If a sequence~\(\aa\) in~\(\RZtwo\) is \(\HT^{*}(X)\)-regular,
  then the restriction map~\(\HT^{*}(X)\to H^{*}_{T(\aa)}(X)\) induces an isomorphism
  \begin{equation*}
    H^{*}_{T(\aa)}(X) \cong \HT^{*}(X)/\aa.
  \end{equation*}
\end{lemma}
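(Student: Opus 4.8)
The plan is to induct on the length $m$ of $\aa=(a_{1},\dots,a_{m})$, peeling off one character at a time via the Gysin sequence of a circle bundle. The case $m=0$ is trivial, so I would concentrate on the inductive step. Put $\aa'=(a_{1},\dots,a_{m-1})$; as a linearly independent initial segment of $\aa$ it is $\HT^{*}(X)$-regular, so the inductive hypothesis supplies an isomorphism $H^{*}_{T(\aa')}(X)\cong\HT^{*}(X)/\aa'$ induced by restriction. Since $T(\aa)\subset T(\aa')$ with quotient a circle, I would observe that the canonical map
\[
  X_{T(\aa)}=ET\times_{T(\aa)}X\longrightarrow ET\times_{T(\aa')}X=X_{T(\aa')}
\]
is a principal $S^{1}$-bundle, namely the pullback along the projection $X_{T(\aa')}\to BT(\aa')$ of the bundle $ET/T(\aa)\to BT(\aa')$. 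Its Euler class $e\in H^{2}_{T(\aa')}(X)$ is therefore the image, under $R\to H^{*}(BT(\aa'))\to H^{*}_{T(\aa')}(X)$, of the class in $H^{2}(BT(\aa'))$ classifying $ET/T(\aa)\to BT(\aa')$; the point is that this class is, up to a nonzero scalar, the image of $a_{m}$, because among the characters of $T(\aa')$ the one whose kernel has identity component $T(\aa)$ is the restriction of~$\chi_{a_{m}}$. So under $H^{*}_{T(\aa')}(X)\cong\HT^{*}(X)/\aa'$ the class $e$ corresponds, up to a unit, to the residue class of $a_{m}$.

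Next I would feed this into the Gysin sequence
\begin{multline*}
  \cdots \longrightarrow H^{*-2}_{T(\aa')}(X) \xrightarrow{\ \cup e\ } H^{*}_{T(\aa')}(X) \xrightarrow{\ \pi^{*}\ } H^{*}_{T(\aa)}(X) \\ \longrightarrow H^{*-1}_{T(\aa')}(X) \xrightarrow{\ \cup e\ } H^{*+1}_{T(\aa')}(X) \longrightarrow \cdots
\end{multline*}
The hypothesis that $\aa$ is $\HT^{*}(X)$-regular says precisely that $a_{m}$, hence $e$, is a non-zero-divisor on $\HT^{*}(X)/\aa'\cong H^{*}_{T(\aa')}(X)$, so $\cup e$ is injective, the long exact sequence breaks into short exact pieces, and the bundle-projection pullback $\pi^{*}$ identifies $H^{*}_{T(\aa)}(X)$ with $\coker(\cup e)=H^{*}_{T(\aa')}(X)/(e)\cong(\HT^{*}(X)/\aa')/(a_{m})=\HT^{*}(X)/\aa$. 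Since $\pi^{*}$ is the restriction map, the composite isomorphism $\HT^{*}(X)/\aa\xrightarrow{\ \sim\ }H^{*}_{T(\aa)}(X)$ is induced by the restriction $\HT^{*}(X)\to H^{*}_{T(\aa)}(X)$, which is the assertion.

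The step I expect to need the most care is the Euler class identification: one has to remember that $T(\aa)$ is only the \emph{identity component} of $\bigcap_{i}\ker\chi_{a_{i}}$, and check that replacing $a_{m}$ by the primitive generator of the relevant character subgroup — which is what literally classifies the circle bundle — changes neither the ideal it generates nor the regularity hypothesis, since it only rescales by a nonzero scalar. Everything else is formal. An alternative route that bypasses the induction: $X_{T(\aa)}$ is the pullback of $X_{T}\to BT\leftarrow BT(\aa)$, so the Eilenberg--Moore spectral sequence — convergent because $BT$ is simply connected — gives $H^{*}_{T(\aa)}(X)\cong\Tor_{R}(\HT^{*}(X),R/\aa)$; since $\aa$ is a regular sequence in the polynomial ring $R$ the Koszul complex on $\aa$ resolves $R/\aa$, so this $\Tor$ is the Koszul homology of $\HT^{*}(X)$ with respect to $\aa$, which the regularity hypothesis concentrates in homological degree $0$, where it equals $\HT^{*}(X)/\aa$, and the edge homomorphism is again the restriction map.
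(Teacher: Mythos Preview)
Your proof is correct and follows essentially the same inductive strategy as the paper, reducing to a single element and using regularity to break a long exact sequence into short exact pieces. The only difference is cosmetic: you obtain that long exact sequence as the Gysin sequence of the circle bundle \(X_{T(\aa)}\to X_{T(\aa')}\), while the paper writes down the corresponding chain-level short exact sequence \(0\to C_{T}^{*}(X)\xrightarrow{\cdot a}C_{T}^{*}(X)\to C_{T'}^{*}(X)\to 0\) in the singular Cartan model, which yields the same long exact sequence in cohomology.
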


\begin{proof}
  This is again contained in the proof of~\cite[Thm.~5.7]{AlldayFranzPuppe:orbits1}.
  By induction, we may assume that \(\aa\) consists of a single element~\(0\ne a\in\RZtwo\).
  We may also assume that it is not divisible by any integer~\({}>1\).
  
  Let \(C_{T}^{*}(X)\) be the singular Cartan model for~\(X\), \cf~\cite[Sec.~3.2]{AlldayFranzPuppe:orbits1}.
  We may assume that \(a\) is contained in the basis of~\(\RZtwo\) chosen in the definition of the singular Cartan model.\
  We then have a short exact sequence
  \begin{equation}
    0\longrightarrow C_{T}^{*}(X) \stackrel{{}\cdot a}{\longrightarrow}
    C_{T}^{*}(X) \longrightarrow C_{T''}^{*}(X) \longrightarrow 0.
  \end{equation}
  Because \(a\) is \(\HT^{*}(X)\)-regular, this induces the exact sequence
  \begin{equation}
    0\longrightarrow \HT^{*}(X) \stackrel{{}\cdot a}{\longrightarrow}
    \HT^{*}(X) \longrightarrow H_{T''}^{*}(X) \longrightarrow 0,
  \end{equation}
  proving the claim.
\end{proof}

For any~\(x\in X\), the kernel of the restriction map~\(p_{x}\colon H^{2}(BT)\to H^{2}(BT_{x})\)
has dimension equal to the codimension of~\(T_{x}\) in~\(T\).
We say that a subset~\(S\subset H^{2}(BT)\) is \emph{\(k\)-localizing for~\(X\)}
if for any~\(x\in X\) at least~\(\min(k,\codim T_{x})\) linearly independent elements from~\(S\)
lie in~\(\ker p_{x}\).
This notion behaves well with respect to subtori:

\begin{lemma}
  \label{thm:localizing-quotient}
  Let \(X\) be a \(T\)-space, and let \(S\subset H^{2}(BT)\) be \(k\)-localizing for~\(X\)
  for some~\(k\ge0\).
  For any subtorus~\(T'\subset T\) of codimension~\(l\le k\),
  the image of~\(S\) in~\(H^{2}(BT')\) is \((k-l)\)-localizing for~\(X\), considered as a \(T'\)-space.
\end{lemma}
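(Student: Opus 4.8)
The plan is to verify the defining condition of ``$(k-l)$-localizing'' pointwise. Fix $x\in X$ and set $c=\dim T-\dim T_{x}$, the codimension of the $T$-isotropy group at $x$. The $T'$-isotropy group at $x$ is $T'_{x}=T'\cap T_{x}$, of codimension $c'=\dim T'-\dim T'_{x}$ in $T'$. Let $q\colon H^{2}(BT)\to H^{2}(BT')$ be the (surjective) restriction map, so $\dim\ker q=l$, and let $p_{x}$, $p'_{x}$ be the restrictions $H^{2}(BT)\to H^{2}(BT_{x})$ and $H^{2}(BT')\to H^{2}(BT'_{x})$. Composing the inclusions $T'_{x}\hookrightarrow T_{x}\hookrightarrow T$ and $T'_{x}\hookrightarrow T'\hookrightarrow T$, these maps fit into a commuting square with the restriction $H^{2}(BT_{x})\to H^{2}(BT'_{x})$. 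What must be produced is a linearly independent subset of $q(S)$ of size $\min(k-l,c')$ lying in $\ker p'_{x}$.

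The first step is the linear-algebra identity $q(\ker p_{x})=\ker p'_{x}$. Passing to Lie algebras, $H^{2}(BT)\cong\mathfrak{t}^{*}$ and each restriction map becomes restriction of linear functionals; then $\ker p_{x}$ is the annihilator $\mathfrak{t}_{x}^{\perp}$ in $\mathfrak{t}^{*}$, $\ker q$ is $(\mathfrak{t}')^{\perp}$, and $\ker p'_{x}$ is the annihilator of $\mathfrak{t}'_{x}=\mathfrak{t}'\cap\mathfrak{t}_{x}$ in $(\mathfrak{t}')^{*}$. The inclusion $q(\ker p_{x})\subseteq\ker p'_{x}$ is immediate from the commuting square, and a dimension count using $\mathfrak{t}_{x}^{\perp}\cap(\mathfrak{t}')^{\perp}=(\mathfrak{t}_{x}+\mathfrak{t}')^{\perp}$ together with $\dim(\mathfrak{t}_{x}+\mathfrak{t}')=\dim\mathfrak{t}_{x}+\dim\mathfrak{t}'-\dim\mathfrak{t}'_{x}$ gives $\dim q(\ker p_{x})=c'=\dim\ker p'_{x}$, hence equality. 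I record two consequences: $\dim(\ker p_{x}\cap\ker q)=c-c'$, and — since this number is at most $\dim\ker q=l$ — the inequality $c'\ge c-l$; trivially also $c'\le c$.

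The second step invokes the hypothesis: there is a subspace $V\subseteq\ker p_{x}$ spanned by $\min(k,c)$ linearly independent elements of $S$, so $\dim V=\min(k,c)$. Its image $q(V)\subseteq\ker p'_{x}$ is spanned by finitely many elements of $q(S)$, so some subset of these is a basis of $q(V)$; thus it suffices to show $\dim q(V)\ge\min(k-l,c')$. The crucial point is that $V\cap\ker q\subseteq\ker p_{x}\cap\ker q$, so by Step~1
\begin{equation*}
  \dim q(V)=\dim V-\dim(V\cap\ker q)\ge\min(k,c)-(c-c').
\end{equation*}
If $c\le k$ the right-hand side equals $c'\ge\min(k-l,c')$. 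If $c>k$ it equals $c'-(c-k)$, which by $c'\ge c-l$ is at least $(c-l)-(c-k)=k-l\ge\min(k-l,c')$. Since $x$ was arbitrary, $q(S)$ is $(k-l)$-localizing for $X$ regarded as a $T'$-space.

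The argument is entirely elementary; the only step that genuinely requires care is the dimension estimate in Step~2, where the naive bound $\dim(V\cap\ker q)\le l$ is too weak to conclude and one must instead use $\dim(V\cap\ker q)\le\dim(\ker p_{x}\cap\ker q)=c-c'$ — which is precisely what the surjectivity in Step~1 buys.
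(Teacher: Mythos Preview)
Your proof is correct and follows essentially the same approach as the paper's: both fix $x\in X$, set up the commuting square of restriction maps, and exploit that $q$ carries $\ker p_{x}$ onto $\ker p'_{x}$. The paper's version is terser, splitting into the cases $c\le k$ (where $S$ contains a basis of $\ker p_{x}$, hence $q(S)$ one of $\ker p'_{x}$) and $c>k$ (where the naive bound $\dim q(V)\ge k-l$ already suffices) rather than using your unified estimate $\dim(V\cap\ker q)\le c-c'$, but the content is the same.
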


\begin{proof}
  Let \(x\in X\) and consider the commutative diagram of surjections
  \begin{equation}
    \begin{tikzcd}
      H^{2}(BT) \arrow{r}{p_{x}} \arrow{d}[left]{\pi } & H^{2}(BT_{x}) \arrow{d} \\
      H^{2}(BT') \arrow{r}{p'_{x}} & H^{2}(BT'_{x})\rlap{.}
    \end{tikzcd}
  \end{equation}
  If \(S\) contains a basis for~\(\ker p_{x}\), then \(\pi (S)\) contains one for~\(\ker p'_{x}\).
  If \(S\) contains \(k\)~linearly independent elements from~\(\ker p_{x}\), then \(\pi (S)\)
  contains \(k-l\)~linearly independent elements from~\(\ker p'_{x}\). In either case,
  we have found enough linearly independent elements in~\(\pi (S)\cap\ker p'_{x}\),
  which proves the claim.
\end{proof}

\begin{theorem}
  \label{thm:char-syzygy}
  Let \(X\) be a nice \(T\)-space, and
  let \(S\subset\RZtwo\) be \(k\)-localizing for~\(X\) for some~\(k\ge 0\).
  Then \(\HT^{*}(X)\) is a \(k\)-th syzygy
  over~\(R\) if and only if any linearly independent % \(R\)-regular
  sequence in~\(S\) of length at most~\(k\) is
  \(\HT^{*}(X)\)-regular.
\end{theorem}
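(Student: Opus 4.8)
The easy direction is immediate from Lemma~\ref{thm:syzygy-lin-indep-seq}: if $\HT^{*}(X)$ is a $k$-th syzygy, then every linearly independent sequence in~$\RZtwo$ of length at most~$k$ is $\HT^{*}(X)$-regular, in particular every such sequence that lies in~$S$; this uses nothing about~$S$ being localizing. For the converse, by Lemma~\ref{thm:syzygy-lin-indep-seq} it suffices to show that \emph{every} linearly independent sequence in~$\RZtwo$ of length at most~$k$ is $\HT^{*}(X)$-regular, knowing this for sequences contained in~$S$. The plan is an induction on~$k$, run simultaneously for all nice $T$-spaces, whose content sits in the case $k=1$: \emph{if $S$ is $1$-localizing for~$X$ and every nonzero element of~$S$ is $\HT^{*}(X)$-regular, then every nonzero element of~$\RZtwo$ is $\HT^{*}(X)$-regular}; call this~$(\star)$. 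To prove~$(\star)$ I would invoke the known fact that each associated prime of the $R$-module~$\HT^{*}(X)$ is either~$(0)$ or of the form $\mathfrak{p}_{K}=\ker(R\to H^{*}(BK))$ for some subtorus~$K$ arising as the identity component of an isotropy group of~$X$; this is the point at which the finiteness hypotheses on~$X$ are used. If a nonzero $a\in\RZtwo$ were not $\HT^{*}(X)$-regular it would lie in such a~$\mathfrak{p}_{K}$ with $K\ne T$; picking $x\in X$ with $T_{x}^{\circ}=K$ we would get $\ker p_{x}=\mathfrak{p}_{K}\cap\RZtwo$, of dimension $\codim T_{x}\ge1$, so by $1$-localizing $S$ would contain a nonzero element of~$\mathfrak{p}_{K}$, which is then not $\HT^{*}(X)$-regular --- a contradiction.

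For the inductive step ($k\ge2$), let $\aa=(a_{1},\dots,a_{m})$ be linearly independent in~$\RZtwo$ with $m\le k$. If $m<k$, then $\aa$ is covered by the inductive hypothesis with~$m$ in place of~$k$ (note $S$ is also $m$-localizing, and its linearly independent sequences of length $\le m$ are regular by assumption), so assume $m=k$. By~$(\star)$, $a_{1}$ is $\HT^{*}(X)$-regular, hence $\HT^{*}(X)/a_{1}\cong H^{*}_{T'}(X)$ with $T'=T(a_{1})$ by Lemma~\ref{thm:reduction}, and the image~$\bar S$ of~$S$ in~$H^{2}(BT')$ is $(k-1)$-localizing for the $T'$-space~$X$ by Lemma~\ref{thm:localizing-quotient}. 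I will show below that every linearly independent sequence in~$\bar S$ of length $\le k-1$ is $H^{*}_{T'}(X)$-regular. Granting this, the inductive hypothesis, applied to~$X$ viewed as a $T'$-space and to the set~$\bar S$, gives that every linearly independent sequence in~$H^{2}(BT')$ of length $\le k-1$ is $H^{*}_{T'}(X)$-regular; in particular so is the image of $(a_{2},\dots,a_{k})$ in~$H^{2}(BT')$, which is linearly independent of length $k-1$. Together with regularity of~$a_{1}$ this shows that~$\aa$ is $\HT^{*}(X)$-regular, and Lemma~\ref{thm:syzygy-lin-indep-seq} then finishes the induction.

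To prove the deferred claim, fix linearly independent $(\bar b_{1},\dots,\bar b_{j})$ in~$\bar S$ with $j\le k-1$ and choose lifts $b_{i}\in S$; since the~$\bar b_{i}$ are linearly independent modulo~$a_{1}$, the~$b_{i}$ are automatically linearly independent in~$\RZtwo$ (and $a_{1}\notin\langle b_{1},\dots,b_{j}\rangle$). As a regular sequence of homogeneous elements of positive degree in the finitely generated graded module~$\HT^{*}(X)$ stays regular under permutations, it is enough to show that $(b_{1},\dots,b_{j},a_{1})$ is $\HT^{*}(X)$-regular. Now $(b_{1},\dots,b_{j})$ is a linearly independent sequence in~$S$ of length $\le k$, hence $\HT^{*}(X)$-regular by hypothesis, so $\HT^{*}(X)/(b_{1},\dots,b_{j})\cong H^{*}_{T''}(X)$ with $T''=T(b_{1},\dots,b_{j})$ by Lemma~\ref{thm:reduction}. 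For any $c\in S$ with nonzero image in~$H^{2}(BT'')$, the sequence $(b_{1},\dots,b_{j},c)$ is again linearly independent, lies in~$S$, and has length $\le k$, hence is $\HT^{*}(X)$-regular; so the image of~$c$ is $H^{*}_{T''}(X)$-regular. Thus every nonzero element of the image of~$S$ in~$H^{2}(BT'')$ is $H^{*}_{T''}(X)$-regular, and since that image is $1$-localizing for~$X$ as a $T''$-space (Lemma~\ref{thm:localizing-quotient}), $(\star)$ applied to~$T''$ yields that \emph{every} nonzero element of~$H^{2}(BT'')$ is $H^{*}_{T''}(X)$-regular; in particular the image of~$a_{1}$ in~$H^{2}(BT'')$, which is nonzero, is $H^{*}_{T''}(X)$-regular, that is, $a_{1}$ is regular on~$\HT^{*}(X)/(b_{1},\dots,b_{j})$. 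Hence $(b_{1},\dots,b_{j},a_{1})$ is $\HT^{*}(X)$-regular, as required.

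The main obstacle is~$(\star)$: once it is available, everything else is formal bookkeeping with Lemmas~\ref{thm:syzygy-lin-indep-seq}, \ref{thm:reduction} and~\ref{thm:localizing-quotient} together with the permutability of graded regular sequences. The topology of~$X$ enters only through the description of the associated primes of~$\HT^{*}(X)$ in terms of the isotropy subtori, so I would first isolate and carefully reference (or reprove) that statement. Two degenerate cases deserve separate mention: if $X^{T}=\emptyset$ then $\HT^{*}(X)$ is $R$-torsion and the hypothesis forces it to vanish, so it is trivially a $k$-th syzygy; and if $k>r$ one first reduces to $k=r$, where being a $k$-th syzygy simply means being free over~$R$.
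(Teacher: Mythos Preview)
Your argument is correct. It follows the same inductive skeleton as the paper's proof but is organized differently in two places.

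For the base case~$(\star)$ you appeal to the (true, standard) fact that every associated prime of~$\HT^{*}(X)$ is one of the linear primes~$\mathfrak{p}_{K}$ attached to isotropy subtori. The paper instead argues directly from the localization theorem: since $S$ is $1$-localizing, inverting the multiplicative set~$\hat S$ generated by~$S$ turns the restriction $\hat S^{-1}\HT^{*}(X)\to\hat S^{-1}\HT^{*}(X^{T})$ into an isomorphism; as no element of~$\hat S$ is a zero-divisor, the localization map on~$\HT^{*}(X)$ is injective, so $\HT^{*}(X)$ embeds in the free module~$\HT^{*}(X^{T})$ and is torsion-free. Your version is fine but imports the associated-primes statement (itself a consequence of the localization theorem) as a black box, whereas the paper's version is self-contained given \cite[Thm.~3.2.6]{AlldayPuppe:1993}.

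For the inductive step you argue directly: you quotient by an arbitrary first element~$a_{1}$ and then verify, via an auxiliary application of~$(\star)$ at the level of~$T''=T(b_{1},\dots,b_{j})$, that the image~$\bar S$ still satisfies the hypothesis for~$k-1$. The paper argues by contrapositive: starting from a bad sequence~$(a_{1},\dots,a_{k})$, it uses the base case over~$T'=T(a_{1},\dots,a_{k-1})$ to replace $a_{k}$ by some~$b\in S$, permutes $b$ to the front, and then by induction (for~$k-1$, over~$T(b)$) replaces the remaining $a_{i}$'s by a bad sequence in~$S$. Both routes rely on Lemmas~\ref{thm:reduction} and~\ref{thm:localizing-quotient} and on permutability of homogeneous regular sequences in exactly the same way; the paper's contrapositive organization is marginally shorter because it moves one element at a time into~$S$, while your direct version needs the extra paragraph verifying the hypothesis for~$\bar S$. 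Your remark that the case~$X^{T}=\emptyset$ forces $\HT^{*}(X)=0$ under the hypothesis is not quite right in general (one only gets that the hypothesis fails for~$k\ge1$ unless $X=\emptyset$), but this does not affect the proof.
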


\begin{proof}
  The `only if' direction follows from the definition of syzygies given above.
  We prove the converse by induction on~\(k\). Note that we may assume \(0\notin S\).

  We consider first the case~\(k=1\).
  Because \(S\) is \(1\)-localizing for~\(X\), we can, for any~\(x\notin X^{T}\), find an element in~\(S\)
  lying in the kernel of the restriction map \(H^{*}(BT)\to H^{*}(BT_{x})\). By the localization theorem
  in equivariant cohomology~\cite[Thm.~3.2.6]{AlldayPuppe:1993},
  this implies that the bottom arrow in the commutative diagram
  \begin{equation}
    \begin{tikzcd}
      \HT^{*}(X) \arrow{r} \arrow{d} & \HT^{*}(X^{T}) \arrow{d} \\
      \hat S^{-1}\HT^{*}(X) \arrow{r} & \hat S^{-1}\HT^{*}(X^{T})
    \end{tikzcd}
  \end{equation}
  is an isomorphism, where \(\hat S\subset R\) is the multiplicative subset generated by~\(S\).

  By assumption, no element in~\(\hat S\) is a zero-divisor for~\(\HT^{*}(X)\),
  so the left localization map in the diagram is injective.
  It follows that the top arrow is also injective,
  meaning that the equivariant cohomology of~\(X\) embeds into that of the fixed point set.
  Since \(\HT^{*}(X^{T})\cong H^{*}(X^{T})\otimes R\) is free over~\(R\), \(\HT^{*}(X)\) must be torsion-free.

  We now consider the case~\(k>1\) and assume that \(M\coloneqq\HT^{*}(X)\) is not a \(k\)-th syzygy.
  By Lemma\nobreakspace \ref {thm:syzygy-lin-indep-seq}
  this means that there is an \(R\)-regular sequence~\(\aa\) of length at most~\(k\) in~\(\RZtwo\) that is not \(M\)-regular.
  We are going to show that there is another such sequence contained in~\(S\).
  If \(\aa=(a_{1},\dots,a_{m})\) is of length~\(m<k\)
  or if \(m=k\) and~\(\aa'=(a_{1},\dots,a_{k-1})\) is not \(M\)-regular,
  then \(M\) is not a syzygy of order~\(k-1\), and we are done by induction.

  So we can assume \(m=k\) and that \(\aa'\) is \(M\)-regular.
  We write \(T'=T(\aa')\), \(R'=H^{*}(BT')=R/\aa'\)
  as well as \(\pi'\) for the canonical projection~\(R\to R'\).
  By Lemma\nobreakspace \ref {thm:reduction} we have an isomorphism
  \begin{equation}
    M' \coloneqq H_{T'}^{*}(X) \cong M/\aa',
  \end{equation}
  and \(\pi'(a_{k})\in R'\) is a zero-divisor for this module.
  Moreover, Lemma\nobreakspace \ref {thm:localizing-quotient} implies that \(\pi'(S)\subset R'\) is \(1\)-localizing for the \(T'\)-space~\(X\).
  By the already established base case, there is a zero-divisor~\(\pi'(b)\ne0\) in~\(\pi'(S)\),
  hence \(0\ne b\in S\) is also a zero-divisor for~\(M'\).
  Therefore, the sequence~\((\aa',b)\) is not \(M\)-regular.

  We may assume that \(b\) is not a zero-divisor for~\(M\) for otherwise we would be done as \(M\) would not be a first syzygy.
  Because \(M\) is graded and bounded below and the sequence~\((\aa',b)\) is made of homogeneous elements,
  we can rearrange it \cite[Prop., p.~1]{Hochster:CM} to obtain \((b,\aa')\),
  which is again \(R\)-regular, but not \(M\)-regular.
  Since \(b\) is not a zero-divisor for~\(M\), this means that \(\aa'\) is not regular for~\(M''=M/b\).
  
  We write \(T''=T(b)\) and define \(R''\) and~\(\pi''\) accordingly.
  Again by Lemma\nobreakspace \ref {thm:localizing-quotient}, \(\pi''(S)\) is \((k-1)\)-localizing for~\(X\), considered as a \(T''\)-space.
  Appealing once more to Lemma\nobreakspace \ref {thm:reduction}, we get an isomorphism
  \begin{equation}
    M'' \cong H_{T''}(X).
  \end{equation}
  Given that \(\aa'\) is not \(M''\)-regular, \(M''\) cannot be a \((k-1)\)-st syzygy over~\(R''\).
  By induction, we can therefore find a sequence~\(\pi''(\cc)\) of length at most~\(k-1\) in~\(\pi''(S)\) that is regular for~\(R''\),
  but not for~\(M''\). Thus, \((b,\cc)\) is an \(R\)-regular sequence in~\(S\) of length at most~\(k\)
  that is not \(M\)-regular, as desired.
\end{proof}

\section{Big polygon spaces}
\label{sec:big-polygon}

Let \(r\ge1\). We write \([r]=\{1,\dots,r\}\)
and \(\Delta\) for the simplex with vertex set~\([r]\),
considered as a simplicial complex.
We call a length vector~\(\ell\in\R^{r}\) \emph{strongly generic} if \(\lambda(\sigma)\ne\lambda(\tau)\)
for any two distinct simplices~\(\sigma\),~\(\tau\) in~\(\Delta\),
where
\begin{equation}
  \lambda(\sigma) = \sum_{j\in\sigma}\ell_{j}.
\end{equation}

Two generic length vectors are called \newterm{equivalent}
if they induce the same notion of `long' and `short' on subsets of~\([r]\).
The equivalence classes of generic length vectors~\(\ell\) are open polyhedral cones
in~\(\R^{r}\) which are the connected components of the complement of a hyperplane arrangement.
Because strong genericity means that
certain additional hyperplanes are avoided, any generic length vector is equivalent to a strongly generic one.
Two equivalent generic length vectors give rise to equivariantly diffeomorphic big polygon spaces,
hence to isomorphic equivariant cohomologies.
Moreover, there is no loss of generality
if one assumes \(\ell\) to be positive and weakly increasing,
see~\cite[Sec.~2]{Franz:maximal}.
In this case, non-equivalent generic length vectors give rise to big polygon spaces
which even non-equivariantly are not diffeomorphic \cite[Prop.~3.7]{Franz:maximal}.
For the rest of this section,
\(\ell\in\R^{r}\) denotes a strongly generic length vector with positive coordinates.

For any \(R\)-algebra~\(\bar R\), we write \(\CC(\Delta;\bar R)\) for the Koszul complex with coefficients in~\(\bar R\).
That is, \(\CC(\Delta;\bar R)\) is a free \(\bar R\)-module with basis~\(\Delta\) and differential
\begin{equation}
  \label{eq:Koszul-d}
  d\gamma = \sum_{j\in\gamma}\pm t_{j}^{q}\,(\gamma\setminus j)
\end{equation}
for~\(\gamma\in\Delta\), \cf~\cite[Sec.~5]{Franz:maximal}.
(Note that we sometimes omit braces, as in~\(\gamma\setminus j\).)
We introduce a grading by giving each generator~\(t_{i}\in R\) the degree~\(2\) and each~\(\gamma\in\Delta\)
the degree~\((2p+2q-1)\cdot\#\gamma\).
The differential~\eqref{eq:Koszul-d} then has degree~\(1-2p\).

Let \(S\subset\Delta\) be a subset. We define \(S_{+}\) and~\(S_{-}\) to be the set
of \(\ell\)-long and \(\ell\)-short simplices in~\(S\), respectively.
We write \(\CC(S;\bar R)\) for the \(\bar R\)-submodule of~\(\CC(\Delta;\bar R)\) with basis~\(S\)
so that
\begin{equation}
  \label{eq:direct-sum}
  \CC(\Delta;\bar R) = \CC(S;\bar R) \oplus \CC(\Delta\setminus S;\bar R)
\end{equation}
as \(\bar R\)-modules. If \(S\) is a simplicial subcomplex of~\(\Delta\), then \(\CC(S;\bar R)\) and \(\CC(S_{-};\bar R)\)
are subcomplexes of~\(\CC(\Delta;\bar R)\), but \(\CC(S_{+};\bar R)\) is not in general.
For any~\(S\) we define the subcomplex
\begin{equation}
  \DD(S;\bar R) = \CC(S;\bar R) + d\,\CC(S;\bar R)
  \subset \CC(\Delta;\bar R).
\end{equation}

For any~\(c=\sum_{\sigma\in\Delta} c_{\sigma}\sigma\in\CC(\Delta;\bar R)\), we write
\begin{equation}
  \supp c = \{\,\sigma\in\Delta \,|\, c_{\sigma}\ne 0\,\}
\end{equation}
for its support and, assuming \(c\notin\CC(\Delta_{+})\),
\begin{equation}
  \lambda(c) = \min\{\,\lambda(\sigma) \,|\, \text{\(\sigma\in\supp c\) short} \,\}. % \in (0,\infty]
\end{equation}

\begin{lemma}
  \label{thm:desc-HTX}
  Consider the differential as a map
  \begin{equation*}
    f_{\ell}\colon \CC(\Delta_{+};R) \to \CC(\Delta;R)\!\bigm/\! \CC(\Delta_{+};R) \cong \CC(\Delta_{-};R),
    \qquad
    \gamma\mapsto d\gamma.
  \end{equation*}
  Then there is a short exact sequence of graded \(R\)-modules
  \let\longrightarrow\to
  \begin{equation*}
    0 \longrightarrow
    \coker f_{\ell} \longrightarrow
    \HT^{*}(X(\ell)) \longrightarrow
    (\ker f_{\ell})[-2p] \longrightarrow
    0.
  \end{equation*}
  In particular, the syzygy order of~\(\HT^{*}(X(\ell))\) over~\(R\)
  equals that of~\(\coker f_{\ell}\).
\end{lemma}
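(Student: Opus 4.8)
The plan is to establish the short exact sequence first and then deduce the statement about syzygy orders as a formal consequence. For the short exact sequence, I would start from the description of $\HT^{*}(X(\ell))$ given in \cite[Lemma~4.4]{Franz:maximal}, which presents it via the Koszul complex $\CC(\Delta;R)$: the key point there is that $\HT^{*}(X(\ell))$ is the cohomology of a complex built from $\CC(\Delta;R)$, and the $\ell$-long/$\ell$-short splitting~\eqref{eq:direct-sum} interacts with the differential so that $\CC(\Delta_{+};R)$ maps into $\CC(\Delta_{+};R)\oplus\CC(\Delta_{-};R)$ with the $\Delta_{-}$-component being exactly $f_{\ell}$. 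Concretely, $\CC(\Delta_{-};R)$ is a subcomplex (being spanned by short simplices, whose boundaries are again short), while $\CC(\Delta_{+};R)$ is only a quotient complex; the differential induces $f_{\ell}$ as the composite $\CC(\Delta_{+};R)\hookrightarrow\CC(\Delta;R)\xrightarrow{d}\CC(\Delta;R)\twoheadrightarrow\CC(\Delta_{-};R)$ up to the identification in the statement. Taking cohomology of the two-term filtration (or equivalently running the long exact sequence of the pair $(\CC(\Delta;R),\CC(\Delta_{-};R))$ together with the fact that $\CC(\Delta_{+};R)$ with the induced differential is, after a degree shift by $2p$, the complex computing $\ker f_{\ell}$ and $\coker f_{\ell}$) yields the four-term exact sequence, which collapses to the stated short exact sequence. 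The degree shift $[-2p]$ comes from the degree $1-2p$ of the Koszul differential, as recorded after~\eqref{eq:Koszul-d}.

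For the "in particular" clause, the point is that $(\ker f_{\ell})[-2p]$ is a \emph{free} $R$-module: $\ker f_{\ell}$ is a submodule of the free module $\CC(\Delta_{+};R)$, but more is true, since $f_{\ell}$ is (after choosing bases) a matrix over $R$ all of whose entries are scalar multiples of monomials $t_{j}^{q}$, and in fact $\ker f_{\ell}$ is a direct summand of $\CC(\Delta_{+};R)$ — this is exactly the content that makes the Koszul-type differential split off a free piece, and it is established in the source material (e.g.\ the structure of $f_{\ell}$ in \cite[Sec.~5--6]{Franz:maximal}). Granting that $K \coloneqq (\ker f_{\ell})[-2p]$ is free, the short exact sequence $0\to\coker f_{\ell}\to\HT^{*}(X(\ell))\to K\to 0$ splits, so $\HT^{*}(X(\ell))\cong\coker f_{\ell}\oplus K$ as $R$-modules. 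Since a finitely generated module is a $k$-th syzygy if and only if each of its direct summands is, and since free modules are $k$-th syzygies for every $k$, the syzygy order of $\HT^{*}(X(\ell))$ equals that of $\coker f_{\ell}$ (with the convention that a free $\coker f_{\ell}$ would make $\HT^{*}(X(\ell))$ free as well, i.e.\ syzygy order $r$; this matches the fact, quoted in the introduction, that $\HT^{*}(X(\ell))$ is never free, so $\coker f_{\ell}$ is never free either).

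The main obstacle I anticipate is not the homological algebra of the two-term filtration, which is routine, but rather justifying cleanly that $\ker f_{\ell}$ is a free (in fact split) submodule of $\CC(\Delta_{+};R)$ and that the cohomology of the $\Delta_{+}$-part with its induced differential really is $\coker f_{\ell}$ in one degree range and $\ker f_{\ell}$ in another. Here one must be careful that $\CC(\Delta_{+};R)$ is \emph{not} a subcomplex of $\CC(\Delta;R)$, so "the induced differential" needs to be spelled out via the splitting~\eqref{eq:direct-sum}: the component of $d$ landing back in $\CC(\Delta_{+};R)$ must be shown to vanish or be accounted for, which is where the geometry of long vs.\ short simplices (a face of a long simplex can be long or short, but the relevant monomial coefficients force the matrix of $f_{\ell}$ to have the triangular/split structure) enters. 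I would isolate this as a lemma citing \cite[Lemma~4.4]{Franz:maximal} and the explicit form of the differential, and then the rest of the argument is the formal splitting and summand bookkeeping described above.
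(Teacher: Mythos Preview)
Your sketch of the short exact sequence is in line with what the paper does: it simply cites \cite[Sec.~4]{Franz:maximal} for the sequence and \cite[Lemma~6.2]{Franz:maximal} for the syzygy statement, and your outline of how the filtration by short simplices produces the sequence is a reasonable gloss on that reference.

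The gap is in your treatment of the ``in particular'' clause. You assert that \(\ker f_{\ell}\) is free, indeed a direct summand of \(\CC(\Delta_{+};R)\), and you attribute this to \cite[Sec.~5--6]{Franz:maximal}. That source does not establish freeness of \(\ker f_{\ell}\); in general the kernel of a map between free \(R\)-modules has no reason to be free, and the triangular/monomial structure of \(f_{\ell}\) you allude to does not force it. The paper even remarks that the splitting of the short exact sequence is a separate, later result of Puppe~\cite[Lemma~3.12]{Puppe:2018}, which would be superfluous if freeness of \(\ker f_{\ell}\) were already known. So your proposed route (free \(\Rightarrow\) split \(\Rightarrow\) syzygy orders agree) rests on an unproved and unneeded hypothesis.

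What \cite[Lemma~6.2]{Franz:maximal} actually uses is much cheaper: the tautological four-term exact sequence
\[
0 \longrightarrow \ker f_{\ell} \longrightarrow \CC(\Delta_{+};R) \longrightarrow \CC(\Delta_{-};R) \longrightarrow \coker f_{\ell} \longrightarrow 0
\]
exhibits \(\ker f_{\ell}\) as a second syzygy of \(\coker f_{\ell}\), so its syzygy order exceeds that of \(\coker f_{\ell}\) by at least~\(2\). Feeding this into the depth lemma applied to \(0\to\coker f_{\ell}\to\HT^{*}(X(\ell))\to(\ker f_{\ell})[-2p]\to 0\) shows directly that \(\HT^{*}(X(\ell))\) and \(\coker f_{\ell}\) are \(k\)-th syzygies for exactly the same values of~\(k\), with no need for freeness or splitting.
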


Here ``\([-2p]\)'' denotes a degree shift by~\(2p\) downwards.
  The sequence actually splits by a result of Puppe~\cite[Lemma~3.12]{Puppe:2018}.

\begin{proof}
  See \cite[Sec.~4, Lemma~6.2]{Franz:maximal}. Note that we have indexed the basis elements
  in a form more convenient for our purposes.
\end{proof}

For any~\(\gamma\in\Delta_{+}\) we define
\begin{equation}
  \sigma_{\ell}(\gamma) = \# \{\, j\in\gamma \mid \text{\(\gamma\setminus j\) is \(\ell\)-short}\,\}
\end{equation}
and
\begin{equation}
  \mu(\ell) = \min\bigl\{\,\sigma_{\ell}(\gamma) \mid \text{\(\gamma\in\Delta_{+}\) and \(\sigma_{\ell}(\gamma)>0\)}\,\bigr\} \ge 1
\end{equation}
as in~\cite[eqs.~(6.6)--(6.7)]{Franz:maximal}.

\begin{theorem}
  \label{thm:syzord-mu-ell}
  The syzygy order of~\(\HT^{*}(X(\ell))\) over~\(R\) is \(\mu(\ell)-1\).
\end{theorem}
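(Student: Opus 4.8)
## Proof Proposal

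The plan is to prove Theorem~\ref{thm:syzord-mu-ell} by combining Lemma~\ref{thm:desc-HTX}, which reduces the question to the syzygy order of~\(\coker f_{\ell}\), with the refined syzygy criterion of Theorem~\ref{thm:char-syzygy} applied to the \(k\)-localizing set~\(S=\{t_{1},\dots,t_{r}\}\). The upper bound (that the syzygy order is \emph{at most}~\(\mu(\ell)-1\)) is already known from~\cite[Prop.~6.3]{Franz:maximal}, so the real work is the lower bound: one must show that for every \(\ell\)-long simplex~\(\gamma\) with \(\sigma_{\ell}(\gamma)>0\) it is false that \(\HT^{*}(X(\ell))\) is a syzygy of order~\(\sigma_{\ell}(\gamma)\) only when \(\sigma_{\ell}(\gamma)=\mu(\ell)\); equivalently, that \emph{every} linearly independent sequence of indeterminates of length at most~\(\mu(\ell)-1\) is \(\coker f_{\ell}\)-regular. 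Since \(S=\{t_{1},\dots,t_{r}\}\) is \(k\)-localizing for every~\(k\) (the isotropy groups of~\(X(\ell)\) are coordinate subtori, so \(\ker p_{x}\) is always spanned by a subset of the~\(t_{i}\)), Theorem~\ref{thm:char-syzygy} tells us it suffices to test regularity on sequences \((t_{i_{1}},\dots,t_{i_{m}})\) with \(m\le\mu(\ell)-1\).

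First I would set up the combinatorial bookkeeping. Fix a subset~\(I\subset[r]\) with \(|I|=m\le\mu(\ell)-1\) and let \(\bar R=R/(t_{i}:i\in I)=\kk[t_{j}:j\notin I]\); reducing the map~\(f_{\ell}\) modulo this ideal amounts to killing every basis vector~\(\gamma\) whose differential, after setting the indicated \(t_{i}^{q}\) to zero, behaves differently. Concretely, \(\coker f_{\ell}\) is the cokernel of a matrix over~\(R\) whose entries are monomials \(\pm t_{j}^{q}\), and one needs to understand what happens to this cokernel after base change to~\(\bar R\). The key structural input from~\cite{Franz:maximal} is that the matrix of~\(f_{\ell}\) is ``triangular'' with respect to the ordering of short simplices by \(\ell\)-value: for a long simplex~\(\gamma\), the short faces \(\gamma\setminus j\) appearing in~\(d\gamma\) all have \(\lambda(\gamma\setminus j)<\lambda(\gamma)\), and one picks out the leading term using \(\lambda\). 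I would make precise the claim that for a sequence of indeterminates of length \(\le\mu(\ell)-1\), this triangular structure persists modulo the ideal: no long simplex~\(\gamma\) has \emph{all} of its at least~\(\mu(\ell)\) short faces~\(\gamma\setminus j\) ``destroyed'' (i.e.\ turned into basis vectors that were quotiented out, which happens precisely when \(j\in I\)), because \(\sigma_{\ell}(\gamma)\ge\mu(\ell)>m=|I|\), so at least one good leading short face survives.

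Granting that, I would argue regularity directly: suppose \(t_{i_{m}}\) is a zero-divisor on \((\coker f_{\ell})/(t_{i_{1}},\dots,t_{i_{m-1}})\). Lift a witnessing element to a cycle-representative~\(c\in\CC(\Delta_{-};\bar R')\) (with \(\bar R'\) the intermediate quotient) with \(t_{i_{m}}c\in\operatorname{im} f_{\ell}+\text{lower terms}\); using the surviving leading short face of each long simplex in the alleged preimage and comparing \(\lambda\)-values, one peels off the top term, reduces the \(\lambda\)-degree, and iterates, concluding that~\(c\) was already in the image — contradiction. This is essentially the argument of~\cite[Lemma~6.2]{Franz:maximal} carried out over the quotient ring, and the fact that we only quotient by \(<\mu(\ell)\) indeterminates is exactly what keeps the leading-term extraction available. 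I expect the \textbf{main obstacle} to be the careful verification that the reduction modulo a \emph{sequence} of indeterminates, performed one step at a time, does not accidentally create new zero-divisors through cancellations among the monomial entries \(\pm t_{j}^{q}\) — in other words, checking that the \(\lambda\)-filtration on \(\CC(\Delta_{-})\) is respected by each successive quotient, and that ``\(\sigma_{\ell}(\gamma)>|I|\) for all long~\(\gamma\)'' is genuinely strong enough to guarantee a surviving pivot at every stage. Once that filtration argument is nailed down, combining the lower bound with the known upper bound from~\cite[Prop.~6.3]{Franz:maximal} finishes the proof, and Theorem~\ref{thm:main} follows immediately via Lemma~\ref{thm:desc-HTX}.
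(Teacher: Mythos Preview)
Your overall architecture matches the paper exactly: reduce via Lemma~\ref{thm:desc-HTX} to the syzygy order of \(M(\ell)=\coker f_{\ell}\), observe that \(S=\{t_{1},\dots,t_{r}\}\) is \(k\)-localizing for every~\(k\), invoke Theorem~\ref{thm:char-syzygy}, cite the upper bound from~\cite[Prop.~6.3]{Franz:maximal}, and then prove that every sequence \((t_{i_{1}},\dots,t_{i_{k}})\) of distinct indeterminates with \(k<\mu(\ell)\) is \(M(\ell)\)-regular.

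The gap is in that last step. Your triangularity heuristic --- each long \(\gamma\) with \(\sigma_{\ell}(\gamma)>0\) keeps at least one short facet \(\gamma\setminus j\) with \(j\notin I\) because \(\sigma_{\ell}(\gamma)\ge\mu(\ell)>|I|\) --- is true, but it does not give a pivot structure on~\(f_{\ell}\): many long simplices can share the same minimal short facet, and there is no ordering of the long side making the matrix triangular over~\(\bar R\). The paper does not try to exhibit \(c\) as an image by peeling. It works in \(\bar R=R/(t_{i_{1}},\dots,t_{i_{k-1}})\), uses the cone decomposition over the vertex \(i=i_{k}\) to normalize a counterexample so that \(c\in\CC(\Gamma;\bar R)\) and \(t_{i}^{q}c=a+db\) with \(a,b\) supported on simplices containing~\(i\) and \(t_{i}^{q}\)-free, and then takes \(c\) \emph{extremal}: maximal \(\lambda(c)\), and among those minimal monomial support in~\(c_{\sigma}\). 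From the leading short simplex~\(\sigma\) one passes to \(\hat\sigma=\sigma\cup i\in\supp b\), uses \(\mu(\ell)\ge k+1\) once to find a short facet \(\hat\rho=\hat\sigma\setminus j\) with \(j\notin I\cup i\), locates a second long \(\hat\tau=\hat\rho\cup j'\in\supp b\), and then uses \(\mu(\ell)\ge k+1\) a \emph{second} time in its contrapositive form (``\(\sigma_{\ell}(\hat\beta)\le k\Rightarrow\sigma_{\ell}(\hat\beta)=0\)'') to show that \emph{every} facet of \(\hat\beta=\hat\sigma\cup j'\) is long. The identity \(d^{2}\hat\beta=0\) then yields a correction \(\tilde c=d\beta\) with \(t_{i}^{q}\tilde c=d\tilde b\) and \(\tilde b\in\CC(\Delta_{+}\setminus\Gamma_{+};\bar R)\); replacing \(c\) by \(c+x\,\tilde c\) for suitable \(x\in\bar R\) produces a strictly better counterexample, contradiction. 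This \(\hat\beta\) construction and the double application of the \(\mu(\ell)\) bound are the missing ideas; a single leading-term extraction does not close the argument.
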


In~\cite[Prop.~6.3]{Franz:maximal} it is shown that \(\mu(\ell)-1\) is an upper bound for the syzygy order,
and it was conjectured that one has equality \cite[Conj.~6.6]{Franz:maximal}.

\begin{proof}
  According to~Lemma\nobreakspace \ref {thm:desc-HTX},
  the syzygy order of~\(\HT^{*}(X(\ell))\) equals that of
  \begin{equation}
    M(\ell)=\CC(\Delta;R) \!\bigm/\! \DD(\Delta_{+};R).
  \end{equation}
  By what we have just said,
  we only have to show that \(M(\ell)\) is a syzygy of order at least~\(\mu(\ell)-1\).
  
  The isotropy subgroups appearing in~\(X(\ell)=X_{p,q}(\ell)\) are the coordinate subtori of~\(T=(S^{1})^{r}\).
  Hence for any~\(k\) the set~\(S=\{t_{1},\dots,t_{r}\}\subset\RZtwo\) is \(k\)-localizing for~\(X\).
  By  Theorem\nobreakspace \ref {thm:char-syzygy}, it suffices to show that 
  for any~\(k<\mu(\ell)\) and any pairwise distinct elements~\(i_{1}\),~\dots,~\(i_{k}\)
  the sequence~\((t_{i_{1}},\dots,t_{i_{k}})\) is \(M(\ell)\)-regular.

\def\ti{t_{i}^{q}}
  We proceed by induction on~\(k\), the case~\(k=0\) being void. For~\(k>0\), we know by induction
  that the sequence~\(t_{i_{1}}\),~\dots,~\(t_{i_{k-1}}\) is \(M(\ell)\)-regular.
  It remains to show that \(t_{i_{k}}\) is not a zero-divisor
  in~\(N=M(\ell)/(t_{i_{1}},\dots,t_{i_{k-1}})\)
  or, equivalently, that \(t_{i_{k}}^{q}\) is not a zero-divisor in~\(N\).
  (Recall that \(M(\ell)\) is graded and bounded below, so that \(t_{i_{k}}N\ne N\).)
  We write \(I=\{i_{1},\dots,i_{k-1}\}\) and~\(i=i_{k}\).

  We start by observing that
  \begin{equation}
  \begin{split}
    N &= \CC(\Delta;R) \!\bigm/\! \bigl(\,\DD(\Delta_{+};R)+(t_{i_{1}},\dots,t_{i_{k-1}})\,\CC(\Delta;R)\,\bigr) \\
    &= \CC(\Delta;\bar R) \,/\, \DD(\Delta_{+};\bar R)      
  \end{split}
  \end{equation}
  where
  \begin{equation}
    \bar R = R/(t_{i_{1}},\dots,t_{i_{k-1}}) =
    \kk\bigl[\,t_{j}\mid j\notin I\,\bigr].
  \end{equation}
  In this case the differential~\eqref{eq:Koszul-d} takes the form
  \begin{equation}
    \label{eq:Koszul-d-new}
    d\gamma = \sum_{\!j\in\gamma\setminus I\!}\pm\,t_{j}^{q}\,(\gamma\setminus j).
  \end{equation}

  Assume that our claim is false. Then there is a~\(c\in\CC(\Delta;\R)\)
  such that \(\ti c\) is contained in~\(\DD(\Delta_{+};\bar R)\) while \(c\) itself is not.
  We can write \(\ti c= a + db\) for some~\(a\),~\(b\in\CC(\Delta_{+};\bar R)\).
  Since
  \begin{equation}
    (a+\ti a')+d(b+\ti b') = \ti (c + a' + db'),
  \end{equation}
  we may assume \(a\) and~\(b\) to be \emph{\(\ti\)-free}.
  By this we mean that in the canonical monomial basis for~\(\bar R\),
  no monomial divisible by~\(\ti\) appears in the non-zero coefficients~\(a_{\gamma}\) and~\(b_{\gamma}\) of~\(a\) and~\(b\)
  with respect to the basis~\(\Delta\) of~\(\CC(\Delta;\bar R)\).
  It implies that \(c\) is \(\ti\)-free.

  Let \(\Gamma\subset\Delta\) be the simplicial complex of the facet not containing \(i\), so that \(\Delta\) is the cone over~\(\Gamma\) with vertex~\(i\).
  Recall that we have a bijection between the simplices~\(\gamma\in\Gamma\) and those in~\(\Delta\setminus\Gamma\)
  given by~\(\gamma\mapsto\hatgamma\coloneqq\gamma\cup i\).

  Using~\eqref{eq:direct-sum}, we can decompose \(a\),~\(b\) and~\(c\) as
  \begin{align}
    c = c_{1}+c_{2} &\in \CC(\Gamma;\bar R)\oplus\CC(\Delta\setminus \Gamma;\bar R), \\
    a = a_{1}+a_{2},\; b=b_{1}+b_{2} &\in \CC(\Gamma_{+};\bar R)\oplus\CC(\Delta_{+}\setminus \Gamma_{+};\bar R)
  \end{align}
  By inspection of~\eqref{eq:Koszul-d-new} we see that \(db_{1}\in\CC(\Gamma;\bar R)\) is \(\ti\)-free and
  \begin{align}
    db_{2} = e_{1} + e_{2} \in \CC(\Gamma;\bar R)\oplus\CC(\Delta\setminus \Gamma;\bar R)
  \end{align}
  where \(e_{1}\) is divisible by~\(\ti\) and \(e_{2}\) is \(\ti\)-free. Hence
  \begin{equation}
    \ti c_{1} = \underbrace{a_{1}+db_{1}}_{\text{\(\ti\)-free}} + \underbrace{e_{1}}_{\!\!\!\!\text{div.\ by \(\ti \)}\!\!\!\!}
    \quad\text{and}\quad
    \ti c_{2} = \underbrace{a_{2}+e_{2}}_{\text{\(\ti\)-free}}.
  \end{equation}
  This implies
  \begin{equation}
    \ti c_{1} = e_{1},
    \qquad
    a_{1}+db_{1} = 0,
    \qquad
    c_{2} = a_{2}+e_{2} = 0.
  \end{equation}
  Hence \(c\in\CC(\Gamma;\bar R)\), and we can write it in the form
  \begin{equation}
    \label{eq:decomposition-ti}
    \ti c = a+ db,
    \qquad
    a,b\in \CC(\Delta_{+}\setminus \Gamma_{+};\bar R).
  \end{equation}

  We additionally assume that \(c\) is a counterexample maximizing~\(\lambda(c)\).
  The simplex~\(\sigma\in\supp c\) realizing \(\lambda(c)\) is necessarily short
  since \(c\notin\CC(\Delta_{+};\bar R)\subset\DD(\Delta_{+};\bar R)\).
  We finally require that among all these counterexamples we pick one
  with the fewest monomials appearing in~\(c_{\sigma}\in\bar R\).
  
  Figure\nobreakspace \ref {fig:beta} may help the reader to visualize the simplices constructed in the following arguments.

  \begin{figure}
    \begin{tikzpicture}
      \draw (0,2) node[above]{\(i\)} -- (1.5,0) node[right]{\(j'\)} -- node[auto]{\(\tau\)} (0,-1) node[below]{\(\rho\)} -- node[auto]{\(\sigma\)} (-1.5,0) node[left]{\(j\)} -- cycle;
      \draw (0,2) -- node[left]{\(\hat\rho\)} (0,-1);
      \draw[dashed] (-1.5,0) -- (1.5,0);
      \draw[->, bend right=30] (-1.5,1.4) node[above]{\(\hat\sigma\)} to (-0.75,0.5);
      \draw[->, bend left=30] (1.5,1.4) node[above]{\(\hat\tau\)} to (0.75,0.5);
      \draw[->, bend left=30] (1.15,-1.15) node[right]{\(\beta\)} to (0.25,-0.5);
    \end{tikzpicture}
    \caption{\label{fig:beta}The simplex~\(\hat\beta\). The face~\(\rho\) % \(=\beta\cap\hat\sigma\cap\hat\rho\)
      is of codimension~\(3\).}
  \end{figure}
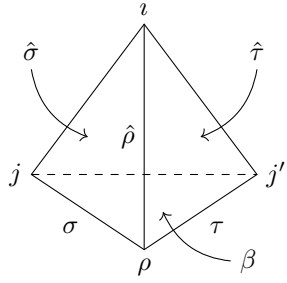
  
  Being short, \(\sigma\) cannot be contained in~\(\supp a\).
  Hence \(\hat\sigma\in\supp b\) and \(b_{\hat\sigma}=\pm c_{\sigma}\) by~\eqref{eq:decomposition-ti} and~\eqref{eq:Koszul-d-new}.
  In particular, \(\hat\sigma\) is long.
  Since \(\sigma\) is short and \(\mu(\ell)\ge k+1\), we conclude that \(\hat\sigma\) has \(k+1\)~short facets. % including \(\sigma\).
  Hence there is a short facet of the form
  \begin{equation}
    \label{eq:def-rho}
    \hat\rho = \hat\sigma\setminus j,
    \quad
    j\notin I\cup i
  \end{equation}
  given that \(\#I=k-1\).
  Let us write
  \begin{equation}
    \label{eq:vertices-sigma}
    \sigma=\{j_{0},\dots,j_{m}\}
    \quad\text{with}\quad
    \ell_{j_{0}}<\dots<\ell_{j_{m}}
  \end{equation}
  (where we have used our assumption that \(\ell\) is strongly generic) and
  \begin{equation}
    \label{eq:def-j-p}
    j_{p}=\max(\sigma\setminus I).
  \end{equation}
  By~\eqref{eq:vertices-sigma}, we may assume \(j=j_{p}\) in~\eqref{eq:def-rho}
  because replacing \(j\) by~\(j_{p}\) can only decrease the length of~\(\hat\rho\).

  Looking at~\eqref{eq:Koszul-d-new},
  we have \(\hat\rho\in\supp d\hat\sigma\) since \(j_{p}\notin I\).
  Given that \(\hat\rho\) is short and not contained in~\(\Gamma\),
  it cannot appear in~\(db=\ti c-a\). Hence
  there must be a (necessarily long) simplex~\(\hat\tau\ne\hat\sigma\) appearing in~\(b\) and
  having \(\hat\rho\) as a facet.

  We have
  \begin{equation}
    \hat\tau=\hat\rho\cup j'
    \quad\text{for some~\(j'\notin\hat\sigma\).}
  \end{equation}
  The contribution of~\(d(b_{\hat\sigma}\hat\sigma)=\pm d(c_{\sigma}\hat\sigma)\)
  to the coefficient~\((db)_{\hat\rho}\) of~\(\hat\rho\) in~\(db\)
  is \(\pm t_{j_{p}}^{q} c_{\sigma}\), and that of~\(d(b_{\hat\tau}\hat\tau)\) likewise is \(\pm t_{j'}^{q} b_{\hat\tau}\).
  Since all monomials appearing in~\((dc_{\sigma}\hat\sigma)_{\hat\rho}\) must somehow
  be compensated for by other simplices appearing in~\(b\),
  we may choose \(\hat\tau\) such that \(t_{j'}^{q}\) divides a monomial appearing in~\(c_{\sigma}\).
  Because \(\tau\) appears in~\(db\) and \(\sigma\) is the shortest simplex appearing there,
  we additionally have \(\lambda(\tau)>\lambda(\sigma)\) or,
  in other words, \(\ell_{j'}>\ell_{j_{p}}\),
  again by strong genericity.

  Now \(\hat\tau\) is a long facet of
  \begin{equation}
    \hat\beta=\hat\sigma\cup j'
    \quad
    \text{where \(\beta=\sigma\cup j'\)}.
  \end{equation}
  The other facets of~\(\hat\beta\) different from~\(\beta\)
  are obtained from~\(\hat\tau\) by substituting \(j_{p}\) for some~\(j_{q}\in\tau\).
  If \(q<p\), we get another long facet by~\eqref{eq:vertices-sigma}.
  Hence \(\hat\beta\) has at most~\(\# I+1=k\) short facets by~\eqref{eq:def-j-p},
  including possibly \(\beta\).
  But \(\mu(\ell)\ge k+1\), so that all facets of~\(\hat\beta\) are long. % in particular \(\beta\).

  Since we have
  \begin{equation}
    0 = d\,d\,\hat\beta = d\Bigl(\pm\ti\,\beta\pm\sum_{j\in\hat\beta\setminus i}\! t_{j}^{q}\,(\hat\beta\setminus j)\Bigr),
  \end{equation}
  we can write
  \( % \begin{equation}
    \ti\,\tilde c = d\,\tilde b
  \) % \end{equation}
  with
  \begin{align}
    \label{eq:def-tilde-c}
    \tilde c &= d\,\beta = \sum_{\!\!j\in\beta\setminus I\!\!}\pm t_{j}^{q}\,(\beta\setminus j)\in \CC(\Gamma;\bar R)
    \shortintertext{and}
    \tilde b &= \pm\sum_{\!\!j\in\beta\setminus I\!\!} t_{j}^{q}\,(\hat\beta\setminus j) \in \CC(\Delta_{+}\setminus\Gamma_{+};\bar R).
  \end{align}
  
  Consider now all monomials appearing in~\(c_{\sigma}\) that are divisible by~\(t_{j'}^{q}\)
  and write their sum as \(t_{j'}^{q} x\)
  with~\(x\in\bar R\). Then \(x\ne0\) by our choice of~\(\hat\tau\), and
  no monomial appearing in it is divisible by~\(\ti\) since \(c\) is \(\ti\)-free.
  The preceding discussion implies
  \begin{equation}
    \ti(c+x\,\tilde c) = a+d(b+x\,\tilde b)
  \end{equation}
  where both~\(a\) and~\(b+x\,\tilde b\in\CC(\Delta_{+}\setminus\Gamma_{+};\bar R)\) are \(\ti\)-free
  and \(c+x\,\tilde c\in\CC(\Gamma;\bar R)\).
  In particular, \(c+x\,\tilde c\) is another counterexample of the form~\eqref{eq:decomposition-ti}
  to our claim that \(\ti\) is not a zero-divisor in~\(M(\ell)\).

  Since \(\ell_{j'}>\ell_{j_{p}}\) and \(j_{q}\in I\) for~\(p<q\le m\),
  the simplex~\(\sigma=\beta\setminus j'\) is the shortest one appearing in the sum~\eqref{eq:def-tilde-c}.
  Hence
  \begin{equation}
    \label{eq:length-new-c}
    \lambda(c+x\,\tilde c)\ge\lambda(c).
  \end{equation}
  The coefficient of~\(\sigma\) in~\(c+x\,\tilde c\) is of the form
  \begin{equation}
    (c+x\,\tilde c)_{\sigma}=c_{\sigma}-t_{j'}^{q} x.
  \end{equation}
  If it vanishes, then we have a strict inequality in~\eqref{eq:length-new-c}
  since \(\ell\) is strongly generic. This
  would contradict our choice of~\(c\) with maximal~\(\lambda(c)\).
  If it does not vanish, then it is still obtained
  from~\(c_{\sigma}\) be removing certain monomials.
  As such, it contains fewer monomials than~\(c_{\sigma}\),
  again contradicting our choice of~\(c\).
  
  We conclude that no counterexample exists.
\end{proof}

\section{Classification of high syzygies}
\label{sec:high-syzygies}

Using the result of the previous section, we can extend the classification of
big polygon spaces with high syzygies in their equivariant cohomology.
Throughout this section, \(\ell\in\R^{r}\) denotes a generic length vector with positive and weakly increasing coefficients.

Maximal syzygies, that is, those of order~\(m\) for \(r=2m+1\)~odd or \(r=2m+2\)~even
were determined in~\cite{Franz:maximal}.
We are going to rephrase the proof in our setting and
extend the result to syzygies of order~\(m-1\).

\begin{lemma}
  \label{thm:J-mu-ell}
  If there is a long subset~\(J\subset[r]\) of size~\(\#J=\mu(\ell)\), then
  \begin{equation*}
    \ell\sim(0,\dots,0,\underbrace{1,\dots,1}_{2\mu(\ell)-1}).
  \end{equation*}
\end{lemma}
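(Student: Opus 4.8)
The plan is to split the argument into a purely formal reduction, using only the definition of $\mu(\ell)$, and a sharper combinatorial step which carries the real content.

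\smallskip
\emph{Reduction.} First I would observe that $J$ must be a \emph{minimal} $\ell$-long subset. If it were not, it would contain a proper $\ell$-long subset, hence a minimal $\ell$-long subset $K$ with $\#K<\mu(\ell)$; for a minimal long subset every facet is short, so $\sigma_{\ell}(K)=\#K$ would be positive and smaller than $\mu(\ell)$, contradicting the definition of $\mu(\ell)$. In particular every facet $J\setminus j$ is $\ell$-short, and (by the same argument) every $\ell$-long subset has at least $\mu(\ell)$ vertices. Since the coordinates of $\ell$ are weakly increasing and $\ell$-longness is preserved when a vertex of a subset is replaced by one with a larger coordinate, after such exchanges I may assume $J=\{r-\mu(\ell)+1,\dots,r\}$; this is still an $\ell$-long subset of size $\mu(\ell)$, hence still minimal long. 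Applying the minimality principle to $\{1,\dots,r-\mu(\ell)+1\}$—which is $\ell$-long, being the complement of the short facet $J\setminus\{r-\mu(\ell)+1\}$—gives $r\ge2\mu(\ell)-1$, so the target vector makes sense. Writing $\mu=\mu(\ell)$ and $B_{0}=\{r-2\mu+2,\dots,r-\mu+1\}$, a short check with the exchange principle shows that $\ell\sim(0,\dots,0,1,\dots,1)$ with $2\mu-1$ ones holds \emph{if and only if} $B_{0}$ is $\ell$-long: if $B_{0}$ is long, then every $\mu$-element subset of the $2\mu-1$ largest indices has coordinate sum at least that of $B_{0}$, hence is long, so (since two disjoint $\mu$-subsets cannot fit among $2\mu-1$ indices) a subset is $\ell$-long exactly when it contains at least $\mu$ of the indices $r-2\mu+2,\dots,r$; the converse is immediate, as $B_{0}$ is itself such a subset.

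\smallskip
\emph{The sharper step.} It remains to show $B_{0}$ is $\ell$-long. Suppose not; then $B_{0}^{c}=\{1,\dots,r-2\mu+1\}\cup\{r-\mu+2,\dots,r\}$ is $\ell$-long of size $r-\mu$, and since its restriction to the big indices equals the short set $J\setminus\{r-\mu+1\}$, it must contain a minimal $\ell$-long subset $K$ meeting the $n:=r-2\mu+1$ smallest indices. The goal is to produce from $K$ an $\ell$-long subset with a positive number of critical vertices strictly smaller than $\mu$—equivalently, a minimal $\ell$-long subset of size $<\mu$—contradicting $\mu(\ell)=\mu$. The key device is the rigidity forced by $\mu(\ell)=\mu$: any $\ell$-long subset $A$ containing $J$ has at most $\mu$ critical vertices (removing a vertex outside $J$ leaves a long subset), hence has either $0$ or exactly $\mu$ of them, and in the latter case all $\mu$ vertices of $J$ are critical for $A$. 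The plan is to combine this with the facet relations of $J$ via the exchange that replaces a small vertex of $K$ by the largest unused index of $B_{0}$; this again yields a $\mu$-element $\ell$-long subset (coordinate sums only increase), and—in the first nontrivial case, where $K$ has exactly two small vertices—iterating turns $K$ into a set of the form $\{r-\mu,\dots,r\}\setminus\{p\}$ with $p\in\{r-\mu+2,\dots,r\}$. Longness of the latter bounds $\ell_{p}$ above by the excess of $\{r-\mu,\dots,r\}$ over half the total weight, while the rigidity applied to $\{r-\mu,\dots,r\}$ (using that $\{r-\mu,\dots,r-1\}$ is then short) bounds $\ell_{r-\mu+1}$ below by the same quantity; since $\ell_{p}\ge\ell_{r-\mu+2}\ge\ell_{r-\mu+1}$, this is a contradiction, so $B_0$ must be $\ell$-long.

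\smallskip
\emph{Main obstacle.} The delicate point is precisely this last step: one has to keep track of exactly which vertices of the auxiliary subsets are critical, and to organize the exchange/descent so that it genuinely cuts below $\mu$ rather than merely reproducing the bound $\mu(\ell)\ge\mu$. A case analysis on how the minimal $\ell$-long subset $K$ inside $B_{0}^{c}$ distributes between the small and the big indices, together with a separate (easy) treatment of the base case $r=2\mu-1$—where $B_{0}$ is $\ell$-long directly from the facet relations of $J$—seems unavoidable. This is where I expect to follow, and adapt, the argument used for the characterization of maximal syzygies in \cite{Franz:maximal}.
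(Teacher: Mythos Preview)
Your reduction to the question whether $B_{0}=\{r-2\mu+2,\dots,r-\mu+1\}$ is $\ell$-long is correct and elegant, and the rigidity observation ($\sigma_{\ell}(A)\in\{0,\mu\}$ for any long $A\supset J$) is genuinely useful. The sharper step, however, has a real gap even in the one case you work out. With $A=\{r-\mu,\dots,r\}$, the rigidity gives two alternatives: if $\sigma_{\ell}(A)=\mu$ then indeed all vertices of $J$ are critical for $A$, so $\{r-\mu,\dots,r-1\}=A\setminus r$ is short and your weight comparison goes through. But if $\sigma_{\ell}(A)=0$, then every facet $\{r-\mu\}\cup(J\setminus j)$ is long, including $\{r-\mu,\dots,r-1\}$, and your lower bound on $\ell_{r-\mu+1}$ collapses. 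Nothing in your setup excludes this case; you would have to iterate the rigidity downward along $\{r-\mu-1,\dots,r-1\}$, $\{r-\mu-2,\dots,r-2\}$, \dots, and there is no evident mechanism forcing this descent to reach $B_{0}$ rather than stalling. A second, smaller gap: a minimal long $K\subset B_{0}^{c}$ need not have size exactly $\mu$ (its size is only bounded \emph{below} by $\mu$), so with two small vertices the exchange may land on all of $\{r-\mu,\dots,r\}$ rather than on $\{r-\mu,\dots,r\}\setminus\{p\}$.

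The paper bypasses all of this with a single normalisation you may find instructive: rather than pushing $J$ up to the top $\mu$ indices, pass to a strongly generic representative and choose, among all $\ell$-long subsets of size $\mu$, the one of \emph{minimal} total weight $\ell(J)$. Set $I=\{\,j\notin J\setminus j_{\max}:(J\setminus j_{\max})\cup j\text{ is long}\,\}$; then $\#I=\sigma_{\ell}\bigl([r]\setminus(J\setminus j_{\max})\bigr)\ge\mu$, and for every $i\in I\setminus j_{\max}$ the set $(J\setminus j_{\max})\cup i$ is long of size $\mu$, so minimality of $\ell(J)$ together with strong genericity forces $\ell_{i}>\ell_{j_{\max}}$. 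Thus at least $\mu-1$ indices lie above $j_{\max}$, and a direct weight comparison shows every index outside $J\cup I$ lies below $j_{\min}$. Hence $J$ is a block of $\mu$ consecutive indices with at least $\mu-1$ indices above it; every $\mu$-subset of $\{j_{\min},\dots,r\}$ is therefore long by upward exchange, and a counting argument pins $j_{\min}=r-2\mu+2$ and gives $\ell\sim(0,\dots,0,1,\dots,1)$ with no case analysis whatsoever. In short, the paper's $J$ \emph{is} your $B_{0}$, identified directly rather than approached from above.
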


\begin{proof}
  \def\jmin{j_{\mathrm{min}}}
  \def\jmax{j_{\mathrm{max}}}
  We may assume \(\ell\) to be strongly generic and set \(\mu(\ell)=k\).
  Note that all subsets~\(I\subset[r]\) with fewer than \(k\)~elements are short
  for otherwise we would get the contradiction
  \begin{equation}
    k = \mu(\ell)\le\sigma_{\ell}(I) = \#I < k
  \end{equation}
  for a (necessarily nonempty) long set~\(I\) of minimal size. % ~\(\#I < k\).

  Among all long subsets~\(J\subset[r]\) of size~\(k\),
  we pick the one with minimal~\(\ell(J)\).
  We set \(\jmin=\min(J)\) and \(\jmax=\max(J)\).
  By what we have just said, \(J\setminus\jmax\) is short. % for otherwise \(\mu(\ell)\) would be \(0\).

  Let \(I\subset[r]\) be the set of those values~\(j\notin J\setminus\jmax\) such that \((J\setminus\jmax)\cup j\) is long.
  This set contains \(\jmax\) and therefore is non-empty. Hence \(\#I\ge k\)
  and \(\#(I\setminus\jmax)\ge k-1\).

  For any~\(i\in I\setminus\jmax\), the set~\(J_{i}=(J\setminus\jmax)\cup i\) is long and of size~\(k\), and \(J_{i}\setminus i\) is short.
  Hence
  \begin{equation}
    k \le \sigma_{\ell}(J_{i}) \le \# J_{i} = k.
  \end{equation}
  This implies \(\ell(J_{i})>\ell(J)\) for otherwise \(\ell(J)\) would not be minimal.
  In summary, \(I\) consists of~\(\jmax\) and \(k-1\) values larger than~\(\jmax\).

  Consider the remaining \(r-\#(J\cup I)=r-2k+1\)~elements of~\([r]\setminus(J\cup I)\).
  If one of them were greater than~\(\jmin\), then we would have \(\ell([r]\setminus J)>\ell(J)\),
  contradicting the assumption that \(J\) is long. These elements therefore are smaller than~\(\jmin\),
  and we conclude that
  \begin{equation}
    J=\{\jmin,\dots,\jmax\}=\{r-2k+2,\dots,r-k+1\}.
  \end{equation}

  This implies that any subset of~\([r]\) containing \(k\)~values~\(\ge\jmin=r-2k+2\) is long.
  These sets are exactly the long sets for the length vector
  \begin{equation}
    \ell' = \bigr(\underbrace{0,\dots,0}_{r-2k+1},\underbrace{1,\dots,1}_{2k-1}\,\bigl).
  \end{equation}
  We conclude that they comprise half of all subsets and therefore that
  \(\ell\) and~\(\ell'\) induce the same notion of `long' and `short'.
\end{proof}

\begin{proposition}[{\cite[Cor.~6.4]{Franz:maximal}}]
  \label{thm:syzymy-m-characterization}
  Let \(r\ge1\).
  \begin{enumerate}
  \item Assume that \(r=2m+1\) is odd. Then \(\HT^{*}(X(\ell))\) is a syzygy of order~\(m\) if and only if
    \(\ell\sim(1,\dots,1)\).
  \item Assume that \(r=2m+2\) is even. Then \(\HT^{*}(X(\ell))\) is a syzygy of order~\(m\) if and only if
    \(\ell\sim(0,1,\dots,1)\).
  \end{enumerate}
\end{proposition}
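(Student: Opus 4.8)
The plan is to translate the whole statement into a combinatorial property of the length vector. By Theorem~\ref{thm:syzord-mu-ell} the syzygy order of~\(\HT^{*}(X(\ell))\) equals~\(\mu(\ell)-1\), so \(\HT^{*}(X(\ell))\) is a syzygy of order~\(m\) exactly when \(\mu(\ell)-1\ge m\); once we know (as shown below) that \(\mu(\ell)\le m+1\) holds for every generic~\(\ell\), this is the same as \(\mu(\ell)=m+1\). Thus both parts reduce to determining the length vectors with \(\mu(\ell)=m+1\). Since \(\mu(\ell)\), the predicates \(\ell\)-long and \(\ell\)-short, and the equivalence~\(\sim\) depend only on the equivalence class of~\(\ell\), and every generic length vector is equivalent to a strongly generic one, we may assume \(\ell\) strongly generic (this also legitimizes applying Theorem~\ref{thm:syzord-mu-ell} and Lemma~\ref{thm:J-mu-ell}).

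I would first record two elementary bounds. Let \(n\) be the largest integer with the property that every subset of~\([r]\) of size at most~\(n\) is \(\ell\)-short; here \(0\le n<r\) because the empty set is short while \([r]\) is long. A one-line parity check gives \(n\le m\): if \(n\ge m+1\), take any \(J\subset[r]\) of size~\(m+1\); then \(J\) is short, and since \(r\le 2m+2\) its complement has size \(r-m-1\le m+1\le n\) and is short as well, which genericity forbids. In the opposite direction, the observation made at the start of the proof of Lemma~\ref{thm:J-mu-ell} shows that every \(\ell\)-long set has at least \(\mu(\ell)\) elements; hence every set of size \(<\mu(\ell)\) is short and \(n\ge\mu(\ell)-1\). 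Combining the two, \(\mu(\ell)-1\le n\le m\), which already establishes \(\mu(\ell)\le m+1\).

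Now assume \(\mu(\ell)=m+1\), so \(n=m\). I claim that some \(\ell\)-long set has size exactly \(m+1=\mu(\ell)\). If not, every \((m+1)\)-element subset of~\([r]\) is short, and passing to complements makes every \((r-m-1)\)-element subset long; for \(r=2m+1\) this says every \(m\)-element set is long, contradicting \(n=m\), and for \(r=2m+2\) it says every \((m+1)\)-element set is long, contradicting the assumption. Applying Lemma~\ref{thm:J-mu-ell} to such a long set yields \(\ell\sim(0,\dots,0,1,\dots,1)\) with \(2\mu(\ell)-1=2m+1\) ones, hence with \(0\) zeros when \(r=2m+1\) and \(1\) zero when \(r=2m+2\); that is, \(\ell\sim(1,\dots,1)\) or \(\ell\sim(0,1,\dots,1)\) respectively. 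For the converse one computes \(\mu\) for these two vectors directly: in each case the \(\ell\)-long sets are precisely those containing at least \(m+1\) coordinates equal to~\(1\), a long set with exactly \(m+1\) such coordinates has \(\sigma_{\ell}=m+1\) (removing a vertex whose coordinate is~\(1\) makes the set short, removing the at most one vertex whose coordinate is~\(0\) does not), and a long set with more such coordinates has no short facet at all; therefore \(\mu(\ell)=m+1\) and the syzygy order equals~\(m\).

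The one step that is genuinely not bookkeeping is producing an \(\ell\)-long set of size exactly~\(\mu(\ell)\), which is what makes Lemma~\ref{thm:J-mu-ell} applicable; this is precisely where the hypothesis \(r\in\{2m+1,2m+2\}\) enters, through the complement dichotomy above. Everything else is a formal consequence of Theorem~\ref{thm:syzord-mu-ell}, Lemma~\ref{thm:J-mu-ell}, and unwinding the definitions of~\(\mu(\ell)\) and~\(\sigma_{\ell}\).
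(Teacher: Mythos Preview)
Your proof is correct and follows essentially the same route as the paper's: both translate the syzygy condition into \(\mu(\ell)=m+1\) via Theorem~\ref{thm:syzord-mu-ell}, produce an \(\ell\)-long subset of size exactly \(m+1=\mu(\ell)\) by the complement/counting argument, and then invoke Lemma~\ref{thm:J-mu-ell}. The only cosmetic difference is that you make the bound \(\mu(\ell)\le m+1\) explicit through the auxiliary integer~\(n\), whereas the paper simply cites that \(m\) is the maximal possible syzygy order and leaves the counting implicit.
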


\begin{proof}
  By Theorem\nobreakspace \ref {thm:syzord-mu-ell}, the condition on the syzygy order
  translates into~\(\mu(\ell)=m+1\).  
  In both cases it is immediate to check
  that this is satisfied by the given length vectors.
  It remains to show the `only if' direction.

  As in the previous proof, any subset with fewer than \(\mu(\ell)=m+1\)~elements is short.
  Hence there must be a long subset of size~\(m+1\) for otherwise 
  more than half of all subsets would be short.
  The claim now follows from Lemma\nobreakspace \ref {thm:J-mu-ell}.
\end{proof}

\begin{proposition}
  Let \(r\ge3\).
  \begin{enumerate}
  \item Assume that \(r=2m+1\) is odd. Then \(\HT^{*}(X(\ell))\) is a syzygy of order~\(m-1\) if and only if
    \(\ell\sim(0,0,1,\dots,1)\).
  \item Assume that \(r=2m+2\) is even. Then \(\HT^{*}(X(\ell))\) is a syzygy of order~\(m-1\) if and only if
    \(\ell\sim(0,0,0,1,\dots,1)\) or~\(\ell\sim(1,1,1,2,\dots,2)\).
  \end{enumerate}
\end{proposition}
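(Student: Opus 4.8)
The plan is to reduce to a combinatorial statement and then mimic the proof of Lemma~\ref{thm:J-mu-ell}. By Theorem~\ref{thm:syzord-mu-ell}, the condition that \(\HT^{*}(X(\ell))\) be a syzygy of order~\(m-1\) is equivalent to \(\mu(\ell)=m\), so it suffices to determine all generic positive weakly increasing length vectors with \(\mu(\ell)=m\). The ``if'' direction is a direct check: for \((0,\dots,0,1,\dots,1)\) with \(2m-1\) ones one takes \(J\) to consist of \(m\) of the ones, and for \((1,1,1,2,\dots,2)\) with \(2m-1\) twos one takes \(J\) to consist of two ones and \(m\) twos; in each case \(\sigma_{\ell}(J)=m\), and one checks as in the proof of Lemma~\ref{thm:J-mu-ell} that no long set has a smaller positive value of~\(\sigma_{\ell}\). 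For the ``only if'' direction set \(k=\mu(\ell)=m\); exactly as in the proofs of Lemma~\ref{thm:J-mu-ell} and Proposition~\ref{thm:syzymy-m-characterization}, every subset of~\([r]\) of size~\(<m\) is short, hence by complementation every subset of size~\(>r-m\) is long.

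Now distinguish two cases. If there is a long subset~\(J\subset[r]\) of size exactly \(m=\mu(\ell)\), then Lemma~\ref{thm:J-mu-ell} applies directly and gives \(\ell\sim(0,\dots,0,1,\dots,1)\) with \(2m-1\) ones and \(r-2m+1\) zeros, that is, \((0,0,1,\dots,1)\) when \(r=2m+1\) and \((0,0,0,1,\dots,1)\) when \(r=2m+2\). Otherwise every subset of size~\(\le m\) is short. If \(r=2m+1\), then the long subsets are precisely those of size~\(\ge m+1\); a long set of size~\(m+1\) has all \(m+1\) facets short, so \(\sigma_{\ell}=m+1\), whereas a long set of size~\(\ge m+2\) has all facets long, so \(\sigma_{\ell}=0\). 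Hence \(\mu(\ell)=m+1\ne m\), a contradiction, and the odd statement is proved. If \(r=2m+2\), the same computation shows that if moreover every subset of size~\(m+1\) were long then \(\mu(\ell)=m+1\); so there must be a short subset of size~\(m+1\), and it remains to show \(\ell\sim(1,1,1,2,\dots,2)\).

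For this last point, observe first that, since being short depends only on the total \(\ell\)-weight, the only long sets~\(\gamma\) with \(0<\sigma_{\ell}(\gamma)<m+1\) have size~\(m+2\): size~\(m+1\) forces \(\sigma_{\ell}=m+1\), size~\(\ge m+3\) forces \(\sigma_{\ell}=0\), and size~\(\le m\) does not occur. So \(\mu(\ell)=m\) means that some long set of size~\(m+2\) has exactly two long facets and that none has between three and \(m+1\) long facets. Choose a short subset~\(\rho\subset[r]\) of size~\(m+1\) of maximal \(\ell\)-weight. For \(j\notin\rho\) the set \(\rho\cup j\) has size~\(m+2\) and is long, its facet~\(\rho\) is short, and by maximality of~\(\rho\) the facet~\((\rho\setminus w)\cup j\) is long precisely when \(\ell_{w}<\ell_{j}\); hence \(\sigma_{\ell}(\rho\cup j)=1+\#\{\,w\in\rho\mid\ell_{w}>\ell_{j}\,\}\), and \(\sigma_{\ell}(\rho\cup j)\ge m\) forces at most two elements of~\(\rho\) to have weight below~\(\ell_{j}\), for every \(j\notin\rho\). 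Ordering \([r]\) by \(\ell\)-weight, this pins~\(\rho\) down to the \(m-1\) heaviest coordinates together with two of the \(m+3\) lightest; feeding this information back into the facet conditions for arbitrary long sets of size~\(m+2\), together with strong genericity, then forces the multiset of weights of~\(\ell\) to consist of three light coordinates and \(2m-1\) heavy ones in the ratio~\(1:2\), that is, \(\ell\sim(1,1,1,2,\dots,2)\). Carrying out the extraction of the exact weight multiset in this final step is the part I expect to be the main obstacle; it is where one must use most carefully both the bound \(\sigma_{\ell}(\gamma)\ge m\) and the absence of any long set of size~\(m+2\) whose value of~\(\sigma_{\ell}\) lies strictly between \(0\) and~\(m\).
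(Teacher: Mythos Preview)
Your reduction to \(\mu(\ell)=m\) via Theorem~\ref{thm:syzord-mu-ell}, the ``if'' checks, the odd case, and the even subcase in which a long set of size~\(m\) exists are all correct and match the paper's argument essentially word for word. The divergence is entirely in the remaining even subcase (no long set of size~\(\le m\), so one must show \(\ell\sim(1,1,1,2,\dots,2)\)).

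Here your argument is genuinely incomplete, and you say so yourself. Up to the point where you show that the maximal-weight short \((m+1)\)-set~\(\rho\) contains the \(m-1\) heaviest coordinates, everything is sound: the computation \(\sigma_\ell(\rho\cup j)=1+\#\{w\in\rho:\ell_w>\ell_j\}\) is correct (using that any \((m+1)\)-set of weight exceeding \(\ell(\rho)\) is long, and any of smaller weight is short), and the bound \(\sigma_\ell\ge m\) then forces \([r]\setminus\rho\subset\{1,\dots,m+3\}\). But knowing the shape of~\(\rho\) alone does not pin down the short/long dichotomy on \emph{all} \((m+1)\)-sets, and your ``feeding this information back'' step is not an argument. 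In particular, your analysis so far only exploits the sets \(\rho\cup j\); to finish you would need to control \(\sigma_\ell(\gamma)\) for \emph{every} long \((m+2)\)-set with a short facet, and it is not clear how the structure of~\(\rho\) propagates to that. (A small side remark: your sentence ``if moreover every subset of size~\(m+1\) were long'' posits an impossible hypothesis, since such a set and its complement have the same size; the conclusion you draw from it is nonetheless trivially true.)

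The paper bypasses this analysis of \(\rho\) entirely. It calls a size-\((m+1)\) set \emph{distinguished} if it contains \(2\) and~\(3\), and argues by contradiction: if some distinguished set~\(J\) is long, pick \(j\notin J\) with \(j>3\); then \(J'=J\cup j\) has the three long facets \(J\), \(J'\setminus2\), \(J'\setminus3\), so \(\sigma_\ell(J')\le m-1\), forcing \(\sigma_\ell(J')=0\). Hence every swap \((J\setminus w)\cup j\) with \(j>3\) stays long, and iterating shows every distinguished set not containing~\(1\) is long; replacing \(2\) or~\(3\) by heavier elements then makes \emph{every} size-\((m+1)\) set avoiding~\(1\) long, which is the long/short pattern of \((0,1,\dots,1)\) with \(\mu=m+1\), a contradiction. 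Therefore all distinguished sets are short, and since being short is downward-closed in weight, every size-\((m+1)\) set meeting \(\{1,2,3\}\) in at least two points is short. These, together with all sets of size \(\le m\), are exactly the short sets for \((1,1,1,2,\dots,2)\), and a count finishes. This ``swap'' mechanism is the key idea you are missing; it converts the single inequality \(\sigma_\ell(J')\le m-1\Rightarrow\sigma_\ell(J')=0\) into a global statement about all \((m+1)\)-sets, which your maximal-\(\rho\) approach does not obviously do.
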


We can restrict ourselves to~\(r\ge3\) here since \(\HT(X(\ell))\) is always torsion for~\(r\le2\).

\begin{proof}
  This time the condition on the syzygy order translates into~\(\mu(\ell)=m\).  
  In all cases it is elementary to verify
  that it is satisfied by the given length vectors.
  It remains to show the `only if' direction.

  Let \(J\subset[r]\) be a long subset of minimal size.
  Since half of all subsets are long, we have \(1\le\#J\le m+1\)
  and also \(m=\mu(\ell)\le\sigma_{\ell}(J)=\#J\),
  hence \(m\le\#J\le m+1\).

  Assume that \(r=2m+1\) is odd. If \(\#J=m+1\),
  then all subsets of size at most~\(m\) are short.
  Since these are already half of all subsets,
  those having at least \(m+1\)~elements are long. This
  implies \(\mu(\ell)=m+1\), contrary to our assumption. Hence \(\#J=m=\mu(\ell)\),
  and we can appeal to Lemma\nobreakspace \ref {thm:J-mu-ell}.

  Now let \(r=2m+2\) be even.
  The case~\(\#J=m\) is dealt with as before.
  So we can assume that long sets have at least \(m+1\)~elements,
  and we have to show \(\ell\sim\ell'=(1,1,1,2,\dots,2)\).
  
  For the purpose of this proof, call a subset~\(J\subset[r]\) \emph{distinguished}
  if it is of size~\(m+1\) and contains \(2\) and~\(3\).
  Assume that there is a distinguished long set~\(J\).
  We claim that in this case all distinguished sets are long.

  In order to prove this, choose a~\(j\notin J\) such that~\(j>3\).
  (This is possible because there are \(m+1\ge2\) elements not in~\(J\).)
  Then both \(J'=J\cup j\) and \(J=J'\setminus j\) are long, hence so are
  \(J'\setminus 2\) and \(J'\setminus 3\). Thus,
  \begin{equation}
    \sigma_{\ell}(J')\le (m+2)-3=m-1.
  \end{equation}
  Since \(\mu(\ell)=m\), this implies \(\sigma_{\ell}(J')=0\). In other words,
  replacing any element of~\(J\) by an element~\(J\niton j>3\) leads to another % distinguished
  long set. Applying this procedure repeatedly, we can transform \(J\) into any other
  distinguished set while keeping it long, which proves the claim.

  Given a distinguished set, we can also replace \(2\) and~\(3\) by larger elements
  without making the set short.
  So we see that any subset~\(J\subset[r]\) of size~\(m+1\) not containing~\(1\)
  is long, as are all subsets of larger size (because their complements,
  being of size at most~\(m\), are short).
  These sets are exactly the long subsets for~\(\ell''=(0,1,\dots,1)\) and therefore
  all \(\ell\)-long subset. But this is impossible as \(\mu(\ell'')=m+1\ne m\).

  We conclude that any distinguished set is short. Hence so is any subset~\(J\)
  of size~\(m+1\) such that \(\#(J\cap\{1,2,3\})\ge2\). Together with the
  subsets of smaller size, these are exactly the \(\ell'\)-short subsets.
  So they are also exactly the \(\ell\)-short subsets, which shows
  \(\ell\sim\ell'\).
\end{proof}

There seems to be no easy description of syzygies of lower order.
For instance, computer experiments show that
for~\(r=9=2\cdot4+1\) there are, up to equivalence, \(5\)~length vectors~\(\ell\)
(out of~\(175,428\)) such that \(\HT^{*}(X(\ell))\) has syzygy order~\(2=4-2\),
and for~\(r=10=2\cdot4+2\) there are \(18\) (out of~\(52,980,624\)).
(The numbers of non-\allowbreak equivalent length vectors can be found in~\cite[Sec.~10.3.1]{Hausmann:mod2}.)

\section{The real case}
\label{sec:real}

Let \(\ell\in\R^{r}\) be a generic length vector, and let \(p\),~\(q\ge1\).
The \emph{real big polygon space}~\(Y(\ell)=Y_{p,q}(\ell)\) is defined by restricting all variables
to the reals, that is, by
\begin{equation}
  \left\{\:
  \begin{aligned}
  \|u_{j}\|^{2} + \|z_{j}\|^{2} &= 1 && \text{for any~\(1\le j\le r\),} \\
  \ell_{1}u_{1}+ \dots + \ell_{r}u_{r} &= 0\mathrlap{,}
  \end{aligned}
  \right.
\end{equation}
where~\(u_{1}\),~\dots,~\(u_{r}\in\R^{p}\) and~\(z_{1}\),~\dots,~\(z_{r}\in\R^{q}\).
It is the fixed point set of~\(X(\ell)\) under complex conjugation of all variables,
hence again smooth. The \(2\)-torus~\(G=(\Z_{2})^{r}\) of rank~\(r\) acts on~\(Y(\ell)\) by reversing the signs
of the \(z\)-variables.
We assume now that \(\kk\) is a field of characteristic~\(2\).
The \(G\)-equivariant cohomology of
real big polygon spaces (and more general spaces)
with coefficients in~\(\kk\)
has been studied by Puppe~\cite{Puppe:2018},
following the work of Hausmann~\cite[Sec.~10.3]{Hausmann:mod2}.

The fixed point set~\(Y(\ell)^{G}\) is empty for~\(p=1\), by the very definition
of a generic length vector.
By the localization theorem, this implies that \(\HG^{*}(Y(\ell))\) is a torsion module
over~\(R=H^{*}(BG)\) in this case.
For~\(p>1\), however, the theory parallels the one
for the complex case. In particular,
the equivariant cohomology~\(\HG^{*}(Y(\ell))\) is given by a formula analogous to Lemma\nobreakspace \ref {thm:desc-HTX},
which allows us to proceed in the same way as before. We content ourselves with sketching the arguments.

\begin{lemma}
  Grade \(\CC(\Delta;R)\) by setting \(\deg{t_{i}}=1\) for each generator~\(t_{i}\in R\)
  and \(\deg{\gamma}=(p+q-1)\cdot\#\gamma\) for each~\(\gamma\in\Delta\).
  With \(f_{\ell}\) defined as in Lemma\nobreakspace \ref {thm:desc-HTX},
  there is a short exact sequence of graded \(R\)-modules
  \let\longrightarrow\to
  \begin{equation*}
    0 \longrightarrow
    \coker f_{\ell} \longrightarrow
    \HT^{*}(X(\ell)) \longrightarrow
    (\ker f_{\ell})[-p] \longrightarrow
    0.
  \end{equation*}
  In particular, the syzygy order of~\(\HT^{*}(X(\ell))\) over~\(R\)
  equals that of~\(\coker f_{\ell}\).
\end{lemma}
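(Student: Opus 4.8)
The plan is to run the analogue, for the real big polygon space~$Y(\ell)$ and coefficients in the characteristic-$2$ field~$\kk$, of the computation underlying Lemma~\ref{thm:desc-HTX} --- that of \cite[Sec.~4, Lemma~6.2]{Franz:maximal} --- replacing complex data by real data throughout; following the discussion above we take $p>1$. First I would fix the geometric picture: for $p>1$ the space $Y(\ell)$ is a smooth closed $G$-manifold, and projection onto the $u$-coordinates exhibits it as a fibrewise-collapsing sphere bundle over the compact convex body $W=\bigl\{\,u\in(\R^{p})^{r}\mid\|u_{j}\|^{2}\le 1\text{ for all }j,\ \sum_{j}\ell_{j}u_{j}=0\,\bigr\}$, the fibre over $u$ being $\prod_{j\,:\,\|u_{j}\|<1}S^{q-1}$, with $G=(\Z_{2})^{r}$ acting by the antipodal map on each sphere factor. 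The strata of $W$ along which exactly the coordinates outside a set $\gamma$ satisfy $\|u_{j}\|=1$ (equivalently $z_{j}=0$) are indexed by the faces $\gamma\in\Delta$, and the next step is to assemble $\HG^{*}(Y(\ell))$ out of these pieces by the same $G$-equivariant Mayer--Vietoris argument as in the complex case.

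The key local input, replacing $H^{*}(\C P^{q-1})=\kk[t]/(t^{q})$ from the complex case, is that $H^{*}_{\Z_{2}}(S^{q-1};\kk)\cong\kk[t]/(t^{q})$ with $|t|=1$: the antipodal $\Z_{2}$-action on $S^{q-1}$ is free, so $S^{q-1}\times_{\Z_{2}}E\Z_{2}$ is homotopy equivalent to $\R P^{q-1}$, and the single differential in the Gysin sequence of the bundle $S^{q-1}\times_{\Z_{2}}E\Z_{2}\to B\Z_{2}$ is multiplication by the mod-$2$ Euler class $t^{q}$, where $t\in H^{1}(B\Z_{2};\kk)$ is the generator --- precisely the coefficient $t_{j}^{q}$ occurring in the Koszul face maps~\eqref{eq:Koszul-d}. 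Tensoring these contributions over the coordinates of each face $\gamma$ and assembling them along the Koszul differential, one identifies $\HG^{*}(Y(\ell))$ --- by the same spectral-sequence collapse argument as in the complex case, legitimate since $t_{1}^{q},\dots,t_{r}^{q}$ is still a regular sequence in the polynomial ring $R=H^{*}(BG)$ --- with the cohomology built from the complex $\CC(\Delta;R)$ carrying the grading of the lemma ($\deg{t_{i}}=1$ and $\deg{\gamma}=(p+q-1)\cdot\#\gamma$). The splitting of the faces into the $\ell$-long ones $\Delta_{+}$ and the $\ell$-short ones $\Delta_{-}$ is a purely combinatorial step, blind to the coefficient field and to the real-versus-complex distinction; it turns the relevant part of the Koszul differential into the map $f_{\ell}$ of Lemma~\ref{thm:desc-HTX} and produces the short exact sequence with outer terms $\coker f_{\ell}$ and $(\ker f_{\ell})[-p]$. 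The degree shift by $p$ is the real analogue of the shift by $2p$: it is the rank of the trivial normal bundle of $Y(\ell)$ inside $\prod_{j}S^{p+q-1}$ cut out by the $\R^{p}$-valued equation $\sum_{j}\ell_{j}u_{j}=0$, and it enters through the same Gysin/Thom bookkeeping. The final ``in particular'' is then inherited exactly as in \cite[Lemma~6.2]{Franz:maximal}: the sequence splits by \cite[Lemma~3.12]{Puppe:2018} (which is stated in this very mod-$2$ context), and $\ker f_{\ell}$ has syzygy order at least that of $\coker f_{\ell}$, so the two syzygy orders coincide.

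I expect the only genuine work, as opposed to transcription, to be a careful audit of the complex-case argument in \cite[Sec.~4]{Franz:maximal} to confirm that it uses nothing essentially ``complex''. The delicate point is the cohomology of the sphere fibres: in the complex case $H^{*}(S^{2q-1})$ and $H^{*}(\C P^{q-1})$ are concentrated in a sparse set of degrees, which forces the relevant spectral sequences to collapse after a single Koszul differential, whereas mod~$2$ the groups $H^{*}(S^{q-1};\kk)$ and $H^{*}(\R P^{q-1};\kk)$ occupy consecutive degrees, leaving a priori more room for higher differentials and for non-trivial extension problems. The reason the argument still closes up is that in characteristic~$2$ there are no sign subtleties and the only differential in play remains multiplication by the Euler classes $t_{j}^{q}$, so the spectral sequences degenerate for the same structural reason as before; checking this, together with tracking the single degree shift by $p$ instead of $2p$, is the whole of the content beyond what is already contained in \cite{Franz:maximal} and \cite{Puppe:2018}.
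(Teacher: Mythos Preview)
Your plan is correct and matches the paper's own proof, which reads in full: ``See~\cite[Sec.~3]{Puppe:2018}, in particular Proposition~3.11 and equations~(3.19)--(3.22). Alternatively, one could adapt the proof given in~\cite{Franz:maximal}.'' You are carrying out precisely this alternative adaptation, and your derivation of the ``in particular'' via the splitting \cite[Lemma~3.12]{Puppe:2018} together with the fact that \(\ker f_{\ell}\) is a second syzygy of \(\coker f_{\ell}\) (hence of syzygy order at least that of \(\coker f_{\ell}\)) is on target.
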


\begin{proof}
  See~\cite[Sec.~3]{Puppe:2018}, in particular Proposition~3.11 and equations~(3.19)--(3.22).
  Alternatively, one could adapt the proof given in~\cite{Franz:maximal}.
\end{proof}

Let \(X\) be a \(G\)-space, and for~\(x\in X\)
let \(p_{x}\colon H^{1}(BG)\to H^{1}(BG_{x})\) be the restriction map.
For~\(0\le k\le r\),
we call a subset~\(S\subset H^{1}(BG)\) \newterm{\(k\)-localizing} if for any~\(x\in X\)
at least \(\min(k, \corank G_{x})\) linearly independent elements from~\(S\)
lie in~\(\ker p_{x}\).

We need the following analogue of Lemma\nobreakspace \ref {thm:syzygy-lin-indep-seq} from~\cite{AlldayFranzPuppe:orbits2}.
A \newterm{nice} \(G\)-space is defined in analogy with the torus case.

\begin{lemma}
  Let \(X\) be a nice \(G\)-space and let \(k\ge0\). Then \(\HG^{*}(X)\) is a \(k\)-th syzygy
  if and only if every linear independent sequence in~\(H^{1}(BG;\F_{2})\) of length at most~\(k\)
  is \(\HG^{*}(X)\)-regular.
\end{lemma}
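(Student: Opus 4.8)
The plan is to transport the proof of Lemma~\ref{thm:syzygy-lin-indep-seq} to the $2$-torus setting, substituting the mod-$2$ structure theory of Allday--Franz--Puppe for~\cite[Thm.~5.7]{AlldayFranzPuppe:orbits1}. The starting point is the analogue of that theorem established in~\cite{AlldayFranzPuppe:orbits2}: for a nice $G$-space, $\HG^{*}(X)$ is a $k$-th syzygy over~$R=H^{*}(BG)$ if and only if it is free over~$H^{*}(BG'')$ for every quotient $2$-torus~$G''$ of~$G$ of rank at most~$k$ (equivalently of rank exactly~$k$ when $k\le r$). First I would record that, with coefficients in a field~$\kk$ of characteristic~$2$, each such cohomology ring is a genuine polynomial ring, $H^{*}(B(\Z_{2})^{l};\kk)=\kk[s_{1},\dots,s_{l}]$ with $s_{j}$ in degree~$1$; this is what makes Hochster's lemma applicable below.

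Next I would identify the subrings $H^{*}(BG'')\subset R$ explicitly. A rank-$l$ quotient $G''=G/G'$ gives, via the surjection $G\to G''$, an inclusion $H^{*}(BG'')\hookrightarrow R$ whose image in degree~$1$ is the annihilator of~$G'$ inside $H^{1}(BG;\F_{2})$, an $\F_{2}$-subspace of dimension~$l$; and $H^{*}(BG'')$ is the polynomial subring of~$R$ generated by any basis of that subspace. Since $H^{1}(BG;\F_{2})\otimes_{\F_{2}}\kk=H^{1}(BG;\kk)$, $\F_{2}$-linear independence of such elements coincides with $\kk$-linear independence, so the subrings occurring in the criterion above are precisely those generated by linearly independent sequences in $H^{1}(BG;\F_{2})$ of length at most~$k$.

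Finally, for a fixed such sequence~$\aa$ with corresponding quotient~$G''$, I would invoke~\cite[Lemma p.~5]{Hochster:CM}: since $M=\HG^{*}(X)$ is graded and bounded below and $H^{*}(BG'')=\kk[\aa]$, the module $M$ is free over~$H^{*}(BG'')$ if and only if $\aa$ is $M$-regular (the argument in~\cite{Hochster:CM} does not require finite generation, exactly as noted in the torus case). Combining the three observations gives the stated equivalence. The only point demanding care --- and the one I expect to be the main obstacle --- is the two-field bookkeeping: one must check that the notion of ``linearly independent'' in the statement, which refers to the $\F_{2}$-vector space $H^{1}(BG;\F_{2})$, is precisely the notion that classifies quotient $2$-tori, and that it is unaffected by extension of scalars to~$\kk$; once this is pinned down, the rest is a routine transcription of the argument for Lemma~\ref{thm:syzygy-lin-indep-seq}.
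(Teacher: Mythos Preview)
The paper does not actually supply a proof of this lemma: it simply states the result and attributes it to~\cite{AlldayFranzPuppe:orbits2}. Your proposal reconstructs a proof by transcribing the argument of Lemma~\ref{thm:syzygy-lin-indep-seq} to the \(2\)-torus setting, invoking the \(p\)-torus analogue of~\cite[Thm.~5.7]{AlldayFranzPuppe:orbits1} from~\cite{AlldayFranzPuppe:orbits2} and then Hochster's lemma, exactly as in the torus case. This is correct, and since both the paper's citation and your argument rest on the same source~\cite{AlldayFranzPuppe:orbits2}, the approaches coincide; you have simply spelled out the details (including the harmless \(\F_{2}\)-versus-\(\kk\) linear independence check) that the paper leaves implicit.
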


Arguing as in Sections~\ref{sec:char-syz} and~\ref{sec:big-polygon}, we obtain the following.

\begin{theorem}
  \label{thm:char-syzygy-2}
  Let \(X\) be a nice \(G\)-space, and
  let \(S\subset H^{1}(BG;\F_{2})\) be \(k\)-localizing for~\(X\) for some~\(k\ge 0\).
  Then \(\HG^{*}(X)\) is a \(k\)-th syzygy over~\(R\)
  if and only if any linearly independent % \(R\)-regular
  sequence in~\(S\) of length at most~\(k\) is
  \(\HG^{*}(X)\)-regular.
\end{theorem}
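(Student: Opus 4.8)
The plan is to carry out \emph{mutatis mutandis} the arguments of Sections~\ref{sec:char-syz} and~\ref{sec:big-polygon}, with the polynomial ring~\(R=H^{*}(BT)=\kk[t_{1},\dots,t_{r}]\) replaced by~\(R=H^{*}(BG)=\kk[t_{1},\dots,t_{r}]\) (now with each~\(t_{i}\) in degree~\(1\)) and the singular Cartan model replaced by the analogous model for~\(G\)-spaces used in~\cite{AlldayFranzPuppe:orbits2}. First I would record the two technical lemmas underlying Theorem\nobreakspace \ref {thm:char-syzygy}: the analogue of Lemma\nobreakspace \ref {thm:reduction}, stating that an \(\HG^{*}(X)\)-regular sequence~\(\aa\) in~\(H^{1}(BG;\F_{2})\) yields \(H^{*}_{G(\aa)}(X)\cong\HG^{*}(X)/\aa\) via the short exact sequence \(0\to C_{G}^{*}(X)\xrightarrow{\cdot a}C_{G}^{*}(X)\to C_{G''}^{*}(X)\to0\) induced by a linear element~\(a\); and the analogue of Lemma\nobreakspace \ref {thm:localizing-quotient}, that the image of a \(k\)-localizing set~\(S\) under the projection to a corank-\(l\) subgroup (\(l\le k\)) is \((k-l)\)-localizing. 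Both proofs are formally identical to the torus case, so I would simply note that they carry over verbatim.

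Next I would prove Theorem\nobreakspace \ref {thm:char-syzygy-2} itself by induction on~\(k\), exactly mirroring the proof of Theorem\nobreakspace \ref {thm:char-syzygy}. The base case~\(k=1\) uses the mod~\(2\) localization theorem \cite[Thm.~3.2.6]{AlldayPuppe:1993} (which holds for elementary abelian \(2\)-groups over a field of characteristic~\(2\)) to conclude that if \(S\) is \(1\)-localizing then \(\hat S^{-1}\HG^{*}(X)\to\hat S^{-1}\HG^{*}(X^{G})\) is an isomorphism, whence torsion-freeness of~\(\HG^{*}(X)\). The inductive step uses Lemma\nobreakspace \ref {thm:syzygy-lin-indep-seq}'s analogue to reduce to a linearly independent sequence of linear elements, passes to the quotient module~\(M'=\HG^{*}(X)/\aa'\cong H^{*}_{G'}(X)\), applies the base case to find a zero-divisor in~\(\pi'(S)\), rearranges the resulting homogeneous sequence using the graded-and-bounded-below property (\cite[Prop., p.~1]{Hochster:CM}), and finally applies the induction hypothesis to~\(M''=\HG^{*}(X)/b\cong H^{*}_{G''}(X)\). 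No step changes substantively; one only replaces ``torus'', ``subtorus'', ``\(\RZtwo\)'' by ``\(2\)-torus'', ``subgroup'', ``\(H^{1}(BG;\F_{2})\)''.

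Finally I would prove the mod~\(2\) analogue of Theorem\nobreakspace \ref {thm:syzord-mu-ell}: the syzygy order of~\(\HG^{*}(Y(\ell))\) over~\(H^{*}(BG)\) equals~\(\mu(\ell)-1\) when~\(p>1\) (and the module is torsion when~\(p=1\)). The isotropy groups in~\(Y_{p,q}(\ell)\) are the coordinate subgroups of~\(G=(\Z_{2})^{r}\), so \(\{t_{1},\dots,t_{r}\}\) is \(k\)-localizing for every~\(k\), and Theorem\nobreakspace \ref {thm:char-syzygy-2} reduces everything to showing that \((t_{i_{1}},\dots,t_{i_{k}})\) is \(M(\ell)\)-regular for~\(k<\mu(\ell)\), where \(M(\ell)=\CC(\Delta;R)/\DD(\Delta_{+};R)\) is now graded with \(\deg{t_{i}}=1\) and \(\deg{\gamma}=(p+q-1)\cdot\#\gamma\). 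Since we are in characteristic~\(2\) the signs in~\eqref{eq:Koszul-d} disappear but the combinatorial content of the proof of Theorem\nobreakspace \ref {thm:syzord-mu-ell}---the construction of the simplices~\(\sigma\), \(\hat\rho\), \(\hat\tau\), \(\hat\beta\) and the length/monomial-count double induction---is unchanged, so I would simply indicate which sign-free version of each displayed equation to use. The main obstacle, such as it is, is purely bookkeeping: verifying that the degree conventions are consistent (so that \(M(\ell)\) is still bounded below, which is what licenses the passage from ``\(t_{i}N\ne N\)'' to ``\(t_{i}\) not a zero-divisor''), and checking that the mod~\(2\) Cartan model and localization theorem are available in the precise generality needed; both are supplied by~\cite{AlldayFranzPuppe:orbits2} and~\cite{Puppe:2018}. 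I would therefore present this section as a guided translation rather than a repetition, flagging only the points where characteristic~\(2\) intervenes.
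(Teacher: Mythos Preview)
Your proposal is correct and takes exactly the paper's approach: the paper's entire proof of Theorem~\ref{thm:char-syzygy-2} (and of Theorem~\ref{thm:syzord-mu-ell-2}) is the single sentence ``Arguing as in Sections~\ref{sec:char-syz} and~\ref{sec:big-polygon}, we obtain the following,'' and your write-up is precisely a fleshed-out version of that translation. Note only that the statement you were asked to prove is Theorem~\ref{thm:char-syzygy-2} alone, so the material on \(M(\ell)\) and the syzygy order of \(\HG^{*}(Y(\ell))\) belongs to Theorem~\ref{thm:syzord-mu-ell-2} rather than to this proof.
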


\begin{theorem}
  \label{thm:syzord-mu-ell-2}
  In the case~\(p>1\)
  the syzygy order of~\(\HG^{*}(Y(\ell))\) over~\(R\) is \(\mu(\ell)-1\).
  In particular,
  it is the same as the syzygy order of~\(\HT(X(\ell);\Q)\) over~\(H^{*}(BT;\Q)\).
\end{theorem}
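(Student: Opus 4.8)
The plan is to carry the proof of Theorem~\ref{thm:syzord-mu-ell} over to characteristic~$2$: the Koszul differential loses its signs, the exponent~$q$ replaces~$2q$, and scalars now lie in a field~$\kk$ of characteristic~$2$, but none of this disturbs the combinatorics. By the real analogue of Lemma~\ref{thm:desc-HTX} stated above, the syzygy order of~$\HG^{*}(Y(\ell))$ over~$R=H^{*}(BG)=\kk[t_{1},\dots,t_{r}]$, now graded by~$\deg{t_{i}}=1$, equals that of
\[
  M(\ell)=\CC(\Delta;R)\bigm/\DD(\Delta_{+};R)\cong\coker f_{\ell}.
\]
Since each~$t_{i}$ is a non-nilpotent element of the polynomial ring~$R$, it is a zero-divisor on a subquotient of~$M(\ell)$ if and only if~$t_{i}^{q}$ is; hence the exact value of the exponent is immaterial to every regularity statement below.

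For the upper bound~$\mu(\ell)-1$, the argument of~\cite[Prop.~6.3]{Franz:maximal} produces, from a long simplex~$\gamma$ with~$\sigma_{\ell}(\gamma)=\mu(\ell)$, an~$R$-regular sequence of length~$\mu(\ell)$ among the indeterminates~$t_{j}$ that is not~$M(\ell)$-regular. That construction is purely algebraic and uses neither the characteristic nor the signs, so it transfers verbatim and shows that~$\HG^{*}(Y(\ell))$ is not a syzygy of order~$\mu(\ell)$.

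For the lower bound I would argue exactly as in the proof of Theorem~\ref{thm:syzord-mu-ell}. The isotropy groups occurring in~$Y(\ell)$ are precisely the coordinate sub-$2$-tori of~$G=(\Z_{2})^{r}$, so~$S=\{t_{1},\dots,t_{r}\}\subset H^{1}(BG;\F_{2})$ is~$k$-localizing for every~$k$; by Theorem~\ref{thm:char-syzygy-2} it therefore suffices to show that~$(t_{i_{1}},\dots,t_{i_{k}})$ is~$M(\ell)$-regular whenever~$k<\mu(\ell)$ and~$i_{1},\dots,i_{k}$ are pairwise distinct. One runs the same induction: passing to~$\bar R=\kk[t_{j}\mid j\notin I]$, one supposes a~$t_{i}^{q}$-free counterexample~$c$ chosen with~$\lambda(c)$ maximal and with the fewest monomials in~$c_{\sigma}$, decomposes along the cone~$\Delta$ over~$\Gamma$ with apex~$i$ using~\eqref{eq:direct-sum}, and derives a contradiction by constructing the simplices~$\hat\rho$,~$\hat\tau$,~$\hat\beta$ and the modified counterexample~$c+x\,\tilde c$. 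Every step — subtracting off~$t_{i}^{q}$-multiples, tracking supports and monomials, rearranging homogeneous regular sequences, and invoking strong genericity — goes through unchanged and is in fact simpler with all signs equal to~$1$. This gives~$\HG^{*}(Y(\ell))$ the syzygy order~$\mu(\ell)-1$, and the final assertion is then immediate because Theorem~\ref{thm:syzord-mu-ell} with~$\kk=\Q$ gives the same value~$\mu(\ell)-1$ for~$\HT^{*}(X(\ell);\Q)$. The only genuine work, in both the transferred~\cite[Prop.~6.3]{Franz:maximal} and the transferred proof of Theorem~\ref{thm:syzord-mu-ell}, is to confirm that no step covertly divides by an integer or relies on a cancellation of signs; one checks in each case that it does not.
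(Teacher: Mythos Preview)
Your proposal is correct and follows exactly the approach the paper itself takes: the paper's proof of this theorem is simply the remark ``Arguing as in Sections~\ref{sec:char-syz} and~\ref{sec:big-polygon}, we obtain the following,'' and you have spelled out precisely that transfer, invoking the real analogue of Lemma~\ref{thm:desc-HTX}, Theorem~\ref{thm:char-syzygy-2}, and the sign-free rerun of the combinatorial argument in Theorem~\ref{thm:syzord-mu-ell}. (One small quibble: the Koszul differential already uses the exponent~$q$ in the complex case, so nothing about the exponent changes---only the grading and the vanishing of signs do.)
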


As a consequence, the characterizations of high syzygies derived in Section\nobreakspace \ref {sec:high-syzygies}
carry over to real big polygon spaces.
The analogue of Proposition\nobreakspace \ref {thm:syzymy-m-characterization}
has already been established by Puppe~\cite[Prop.~3.14]{Puppe:2018}.

\end{document}